%% file: arxiv.tex
\documentclass[11pt,a4paper]{article}

\usepackage[T1]{fontenc}
\usepackage[utf8]{inputenc}
\usepackage[a4paper,margin=1in]{geometry}

\usepackage{graphicx}
\graphicspath{{figures/}}
\usepackage{subcaption}

\usepackage{tabularx}
\usepackage{multirow}
\usepackage{makecell}
\usepackage{diagbox}
\usepackage{booktabs}

\usepackage{amsmath,amssymb,amsfonts}
\usepackage{amsthm}
\usepackage{mathrsfs}
\usepackage{upgreek}
\usepackage{bm}
\usepackage{dsfont}
\usepackage{bbold}
\usepackage{accents}

\usepackage[numbers]{natbib}
\usepackage[ruled]{algorithm}
\usepackage{algpseudocode,algorithmicx}

\usepackage[title]{appendix}
\usepackage[table,xcdraw]{xcolor}
\usepackage{textcomp}
\usepackage{manyfoot}
\usepackage{listings}
\usepackage{comment}
\usepackage{enumitem}
\setlist[enumerate]{itemsep=0mm}
\usepackage{lscape}
\usepackage{pdflscape}

\newtheorem{theorem}{Theorem}
\newtheorem{proposition}{Proposition}
\newtheorem{lemma}{Lemma}

\theoremstyle{definition}
\newtheorem{definition}{Definition}
\newtheorem{assume}{Assumption}
\newtheorem{remark}{Remark}
\newtheorem{example}{Example}
\usepackage{tikz,pgfplots,pgfplotstable}

\pgfplotsset{compat=1.18}

\usepgfplotslibrary{groupplots,fillbetween}
\usetikzlibrary{
    intersections,
    calc,
    arrows.meta,
    matrix,
    spy,
    positioning
}

\definecolor{darkred}{rgb}{0.6,0,0}
\definecolor{darkgreen}{rgb}{0,0.5,0}
\definecolor{darkblue}{rgb}{0,0,0.5}
\definecolor{SkyBlue}{rgb}{0.53,0.81,0.92}

\usepackage[
    colorlinks=true,
    linkcolor=blue,
    citecolor=blue,
    urlcolor=blue
]{hyperref}

\input{newcommand}

\title{Relax and Follow: $\ell_0$-Path Computation with $\ell_0$-Bregman Relaxations}

\author{
Mhamed Essafri\\
\small Univ. Toulouse, Toulouse INP, IRIT, France\\
\small \texttt{mhamed.essafri@irit.fr}
\and
Luca Calatroni\\
\small MaLGa, DIBRIS, Università di Genova\\
\small Istituto Italiano di Tecnologia, Genoa, Italy\\
\small \texttt{luca.calatroni@unige.it}
\and
Emmanuel Soubies\\
\small Univ. Toulouse, Toulouse INP, CNRS, IRIT, France\\
\small \texttt{emmanuel.soubies@cnrs.fr}
}

\date{}

\begin{document}

\maketitle

\begin{abstract}
This work introduces \Algo{}, a novel method for estimating the solution path of $\ell_0$-regularized problems through the use of $\ell_0$ Bregman relaxations (\BR{}). Recently introduced and analyzed in the literature, these relaxations provide continuous reformulations of the original objective, are applicable to possibly non-quadratic data fidelity terms, and depend on a family of functions designed to preserve the global minimizers while eliminating part of the undesirable local minima. Given any numerical solver for the relaxation, the proposed approach dynamically constructs a collection of local minimizers that are candidates for the $\ell_0$-solution path. It exploits warm-start strategies and identifies ranges of the regularization parameter for which each minimizer remains valid under the corresponding relaxation.
Experiments on sparse least-squares and logistic regression problems demonstrate that \Algo{} systematically outperforms state-of-the-art baselines across both synthetic and real-world datasets in terms of various evaluation metrics; additionally, the study investigates how the choice of the \BR{} affects the quality of the estimated path in the sparse Poisson regression setting.
\end{abstract}

\vspace{0.5em}
\noindent\textbf{Keywords:}
Sparse optimization, $\ell_0$ pseudo-norm, Exact relaxations, Regularization path, Bregman divergence.

\section{Introduction}

Sparse optimization plays a central role in many fields, including machine learning, statistics, and signal processing, see, e.g., ~\citep{tillmann2024cardinality} for a review.
In this work, we focus on underdetermined regimes in which the number of unknown parameters, \( N \in \N \), exceeds the sample size, \( M\in\N \) (\( N \gg M \)). In such settings, sparsity becomes particularly appealing, as it enables the construction of low-complexity and interpretable models. Specifically, we consider optimization problems of the form:
\begin{equation}\label{eq:problem_setting}
\hat{\x} \in \argmin_{\x \in \Cc^N} J_0(\x), \quad \text{with} \quad J_0(\x) := F_\y(\A \x) + \lambda_0 \|\x\|_0 + \frac{\lambda_2}{2} \|\x\|^2_2,
\end{equation}
where  \( \A \in \R^{M \times N} \) denotes the measurement (or forward/feature) matrix, and \( \y \in \mathcal{Y}^M \) is the vector of observations, with  $\mathcal{Y} \subseteq \R$ depending on the considered problem. The first term, \( F_\y :
\R^M  \mapsto \R_{\geq 0}\), corresponds to the data fidelity, which measures the discrepancy between the model $\A\x$ and the observed data $\y$. The second term, \( \|\cdot\|_0 \), is the \( \ell_0 \) pseudo-norm that counts the number of nonzero components in its argument, thereby promoting sparsity. The third term, \( \|\cdot\|^2_2 \), is a ridge regularizer, which can penalize large coefficients, reduce overfitting in noisy settings, and enforce the the well-posedness of the optimization problem (i.e., guarantee the existence of a solution) for certain data fidelity terms. We consider the case where the unknown $\x$ is constrained to lie in the set $\Cc^N$ for which, in this paper, we assume either $\Cc = \R$ (no constraint) or $\Cc = \R_{\geq 0}$, the nonnegative orthant. The hyperparameters \( \lambda_0 > 0 \) and \( \lambda_2 \geq 0 \) balance the trade-off between data fidelity, sparsity, and ridge penalization (when present). We assume that $F_\y$ satisfies the following assumption.

\begin{assume}\label{ass:Fy}
 The data fidelity $F_\y$ is coordinate-wise separable, i.e., there exists $f : \R \times \mathcal{Y} \to \R_{\geq 0}$ such that,  for all $\z \in \R^M$, $$F_\y(\z) = \sum_{m=1}^M f(z_m;y_m).$$ Moreover, for each $y \in \mathcal{Y}$, we assume that $f(\cdot;y)$ is convex, proper, twice differentiable on its domain and bounded from below.
\end{assume}

Assumption~\ref{ass:Fy} is satisfied by a broad class of commonly used data-fidelity terms, including the least-squares criterion, the generalized Kullback-Leibler divergence for count data, and the logistic loss for binary classification.

\subsection{Related Works} 

Although the $\ell_0$ pseudo-norm is a natural choice for promoting sparsity, its combinatorial nature leads to  NP-hard optimization problems~\citep{ Natarajan1995-np-hard,nguyen-np-hard} which limits its use in practice. This difficulty has motivated a substantial body of work~\citep{tillmann2024cardinality} on tractable approximations of Problem \eqref{eq:problem_setting} as well as the development of algorithmic strategies, which we briefly review in the first paragraph below. Moreover, selecting an appropriate value for the sparsity-inducing parameter $\lambda_0$ is itself a nontrivial task. In practice, $\lambda_0$ is difficult to tune and highly problem-dependent, which has motivated the development of methods that explore the entire solution path over a range of regularization parameters. We review these approaches in the last two paragraphs of this section, distinguishing between methods that rely on a prescribed grid of $\lambda_0$ values and those that automatically identify the critical values of $\lambda_0$ at which changes occur.

\paragraph{Existing algorithms and continuous relaxations.}
{First, by reformulating Problem~\eqref{eq:problem_setting} as a mixed integer program (MIP), global solutions can be obtained using branch-and-bound (BnB) techniques for problems of moderate size~\citep{bourguignon2015exact,bertsimas2016best,guyard:24,delle2023novel}. The MIP formulation also enables the use of perspective relaxation techniques to convexify products between continuous variables and binary  variables~\citep{frangioni2006perspective,gunluk2010perspective,wei2022ideal}.} {Another class of approaches is the one of} greedy algorithms such as matching pursuit (MP)~\citep{Mallat}, orthogonal matching pursuit (OMP)~\citep{Pati1993Orthogonal}, and single best replacement (SBR)~\citep{Soussen2011FromBD}. They  iteratively modify the support of the solution (e.g., adding, removing or swapping components) according to a given rule. Alternatively, a large body of works replace the discontinuous $\ell_0$ pseudo-norm with continuous convex or nonconvex surrogates. Among convex choices, the most popular one is the $\ell_1$-norm~\citep{tibshirani1996regression,Candes} whose use has been popularized thanks to the development of compressed sensing methods. In the non-convex setting, a broad range of penalties has been proposed, including the capped-$\ell_1$~\citep{zhang2008multi}, $\ell_p$ quasi-norms $(0<p<1)$~\citep{Foucart2009SparsestSO}, the log-sum penalty~\citep{Cands2007EnhancingSB}, the smoothed $\ell_0$ penalty~\citep{Mohimani2008AFA}, the smoothly clipped absolute deviation (SCAD)\citep{Fan2001VariableSV}, the minimax concave penalty (MCP)~\citep{Cun-Hui}, exponential approximations~\citep{Mangasarian1996}, norms ratio $\ell_p/\ell_q$~\citep{Repetti2015,cherni2020spoq}, as well as the reverse Huber penalty~\citep{Pilanci}. These formulations aim to better approximate the $\ell_0$ pseudo-norm while alleviating the bias introduced by convex surrogates. 
Relaxations preserving the set of global minimizers of the original $\ell_0$ problem without introducing spurious local minima (that is, \emph{exact} continous relaxations) have also been introduced. Examples include the continuous exact $\ell_0$ penalty (CEL$0$)~\citep{Soubies2015,soubies2017unified}, which is a special case of the quadratic envelope~\citep{Carlsson2019}. These ideas have been generalized to non-quadratic data terms through the capped-$\ell_1$ penalty~\citep{Weigeneral} and the $\ell_0$ Bregman relaxations (\BR{})~\citep{essafri2024exact} which are the ones considered in this work.

\paragraph{Regularization paths with a predefined $\lambda_0$ grid.} To obtain an estimate of the regularization path, a simple strategy is to apply algorithms designed for a fixed value of $\lambda_0$ along a predefined grid of regularization parameters using warm-start techniques. For instance, \texttt{SparseNet}~\citep{sparseNet:12}, \texttt{ncvreg}~\citep{Breheny2011Ncvreg}, and \texttt{skglm}~\citep{skglm} adopt this strategy to compute the regularization paths of (non-convex) continuous relaxations such as MCP and SCAD, while \texttt{El0ps}~\citep{guyard2025el0ps} tackles the computation of the $\ell_0$ regularization path directly by solving the problem via BnB for each value of $\lambda_0$ on the grid.


\paragraph{Regularization paths with automatic computation of critical $\lambda_0$ values.} Other approaches construct the regularization path by automatically identifying the values of $\lambda_0$ at which the solution changes. {In the context of convex relaxations (e.g., $\ell_1$, $\ell_2$, and elastic net) these techniques have been extensively studied, leading to the development of efficient algorithms~\citep{efron2004least,rosset2007piecewise,friedman2007pathwise,Friedman2010Glmnet}. Their general idea is to exploit the fact that the coefficient profiles of the solution vector are piecewise linear with respect to the regularization parameter. Surprisingly, less attention has been paid to the study of $\ell_0$ regularization path of Problem~\eqref{eq:problem_setting}.}
Yet,~\cite{SoussenPath:15} exploited the piecewise-linear and concave nature of the so-called $\ell_0$-curve (see Section~\ref{sec:l0-curve-background}) to develop two greedy methods, named \texttt{CSBR} and \texttt{L0PD}, that iteratively refine both critical $\lambda_0$ values and the corresponding estimates of the solution (thus providing an estimate of the $\ell_0$-path). {In a different vein,~\cite{CWLSHazimeh:20} proposed the \Algolearn{} method~\citep{hazimeh2023l0learn} to estimate the $\ell_0$-path using coordinate descent (CD) on $J_0$ combined with a local combinatorial search. To compute the sequence of critical $\lambda_0$ values, they exploit the fact that CD stationary points are coordinate-wise (CW) minimizers. Specifically, they derive}
 lower bounds on  $\lambda_0$ for which a CW minimizer stops being a CW minimizer. 
 Hence, given a CW minimizer and its lower bound on $\lambda_0$, the next $\lambda_0$ is chosen slightly below this bound, with the current CW minimizer used as a warm-start point. Initially developed for least-squares regression, this framework was later extended to classification problems, including logistic regression, by~\cite{HazimehClass}.

\subsection{Contributions and Outline}

In this work, we leverage the good properties of exact $\ell_0$ Bregman relaxations of the functional $J_0$,  denoted $J_\Psi$, to derive \Algo{}, a method estimating the solution path of Problem~\eqref{eq:problem_setting} that can be deployed with any solver capable of minimizing~$J_\Psi$. 

We first provide background material in Section~\ref{sec:prelim}, covering useful properties of $J_0$ (and associated $\ell_0$-path) as well as of $J_\Psi$ (definition, exact relaxation property, characterization of local minimizers).

We then derive the proposed \Algo{} method in Section~\ref{sec:main_alg}. Its effectiveness relies on Theorem~\ref{th:min-loc-removed}, which, for each local minimizer $\hat{\x}$ of the exact relaxation $J_\Psi$, provides an interval of values for $\lambda_0$ over which $\hat{\x}$ remains a local minimizer. The principle of \Algo{} (Algorithm~\ref{algo:proposed}) is then to exploit these $\lambda_0$ intervals to implement warm-start strategies  within a tree-search framework. Specifically, the method dynamically builds a set of local minimizers that are candidate to be in the $\ell_0$-path (initialized with the zero vector). This is achieved through multiple passes over points already in this set, using them as warm-starting points for the inner solver with $\lambda_0$ slightly larger (resp., smaller) than the upper (resp., lower) bound of the regularization parameter range associated with the current explored point. After a given number of passes or when no new points remain to be explored in the set,  a final estimate of the $\ell_0$-path is extracted from the computed candidate local minimizers.

Finally, in Section~\ref{sec:expe}, we benchmark \Algo{} against \Algolearn{} on sparse least-squares and logistic regression problems, using both synthetic and real-world datasets. Evaluations are performed with both statistical and optimization-based metrics. We also provide complementary comparisons with \texttt{L0PD} for some of these experiments. In all reported results, \Algo{} 
systematically achieves superior metrics compared to the baselines, independently on the inner solver used. To conclude, we also consider sparse Poisson regression experiments (involving Kullback-Leibler data fidelity) to compare different $J_\Psi$, which are known to eliminate different local minimizers of $J_0$.

We have developed \Algo{} as a modular Python package that supports various combinations of data-fidelity terms, exact relaxations, and inner optimization algorithms. The package will be made publicly available upon acceptance of this work.

\subsection{Notations}

We use the following notation throughout the paper. The set of nonnegative real numbers is denoted by $\R_{\geq 0} = \{x \in \R : x \geq 0\}$. The identity matrix of size $N$ is denoted by $\mathbf{I} \in \R^{N \times N}$. We denote by $[N]$ the set $\{1, 2, \ldots, N\}$ of the first $N$ natural numbers. For a vector $\x \in \R^N$, we define $\x^{(n)} = (x_1, \dots, x_{n-1}, 0, x_{n+1}, \dots, x_N)$ as the vector obtained by zeroing the $n$th coordinate. The support of $\x$, denoted by $\sigma(\x) \subseteq [N]$, is the set of indices $n$ such that $x_n \neq 0$. For a set $\omega \subseteq [N]$, we write $\sharp \omega$ for its cardinality. The submatrix of $\A \in \R^{M \times N}$ formed by selecting only the columns indexed by $\omega$ is denoted by $\A_\omega = (\a_{\omega[1]}, \dots, \a_{\omega[\sharp \omega]}) \in \R^{M \times \sharp \omega}$, and the corresponding restriction of a vector $\x \in \R^N$ is written as $\x_\omega = (x_{\omega[1]}, \dots, x_{\omega[\sharp \omega]}) \in \R^{\sharp \omega}$. The $m$th row and $n$th column of $\A \in \R^{M \times N}$ are denoted respectively $\underline{\a}_m \in \R^N$ and $\a_n \in \R^M$. The unit vector corresponding to the $n$th canonical basis vector of $\R^N$ is denoted by $\e_n$. Unless otherwise stated, $\|\cdot\|$ refers to the standard Euclidean norm $\|\cdot\|_2$.

\section{Preliminaries}\label{sec:prelim}

This section provides the necessary background on the $\ell_0$-path and the \BR{} penalty, which yields an exact continuous relaxation of Problem~\eqref{eq:problem_setting}.

\subsection{Background on $\ell_0$-Path}\label{sec:l0-curve-background}


We recall the definition and the main properties of the \(\ell_0\)-path, which refers to the set of global solutions of Problem~\eqref{eq:problem_setting} as a function of \(\lambda_0\).

To begin, let us recall the characterization of the local minimizers of $J_0$~\citep{Thi2015,essafri2024exact}.
\begin{proposition} A point $\hat \x \in \Cc^N$ is a local minimizer of $J_0$ in \eqref{eq:problem_setting} if and only if it solves
\begin{equation}\label{eq:locMinJ0}
    \hat{\x}_{\hat \sigma} \in \argmin_{\z \in \Cc^{\sharp \hat{\sigma}}} F_\y(\A_{\hat \sigma}\z) + \frac{\lambda_2}{2} \|\z\|^2
\end{equation}
where $\hat{\sigma} = \sigma(\hat \x)$ stands for the support of $\hat \x$. 
Moreover, $\hat \x$ is strict (i.e., there exists a neighbourhood $\mathcal{V}$ of $\hat{\x}$ such that $\forall \x \in \mathcal{V} \backslash \{\hat \x\}$, $J_0(\hat \x) < J_0(\x)$) if and only if $\lambda_2 >0$ or $\A_{\hat{\sigma}}$ is full rank. Finally, global minimizers are strict.
\end{proposition}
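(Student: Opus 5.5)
The plan is to work locally around $\hat\x$, exploiting the fact that $\|\cdot\|_0$ is lower semicontinuous and locally constant on the support. Let $\hat\sigma=\sigma(\hat\x)$ and $G(\z):=F_\y(\A_{\hat\sigma}\z)+\frac{\lambda_2}{2}\|\z\|^2$, which is convex on $\Cc^{\sharp\hat\sigma}$ by Assumption~\ref{ass:Fy}. Choose $\delta>0$ smaller than $\min_{n\in\hat\sigma}|\hat x_n|$. Then every $\x\in\Cc^N$ with $\|\x-\hat\x\|<\delta$ satisfies $\sigma(\x)\supseteq\hat\sigma$, and equality holds exactly when $\x_{\hat\sigma^c}=0$.

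\emph{Necessity.} Assume $\hat\x$ is a local minimizer. Restricting $J_0$ to points supported in $\hat\sigma$ near $\hat\x$, the $\ell_0$ term is constant, equal to $\lambda_0\sharp\hat\sigma$. Hence $\hat\x_{\hat\sigma}$ is a local minimizer of $G$ over $\Cc^{\sharp\hat\sigma}$, and convexity makes it a global minimizer, which is \eqref{eq:locMinJ0}.

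\emph{Sufficiency.} Assume $\hat\x_{\hat\sigma}$ solves \eqref{eq:locMinJ0}. Take $\x$ near $\hat\x$ and consider two cases.
\begin{itemize}
\item If $\sigma(\x)=\hat\sigma$, then $J_0(\x)-J_0(\hat\x)=G(\x_{\hat\sigma})-G(\hat\x_{\hat\sigma})\ge0$.
\item If $\sigma(\x)\supsetneq\hat\sigma$, the $\ell_0$ term grows by at least $\lambda_0>0$. The smooth part $F_\y(\A\x)+\frac{\lambda_2}{2}\|\x\|^2$ is continuous at $\hat\x$ (we assume $\A\hat\x$ lies in the interior of the domain, or we use lower semicontinuity of $F_\y$). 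So after shrinking $\delta$, its variation is below $\lambda_0$, giving $J_0(\x)>J_0(\hat\x)$.
\end{itemize}
This second case also gives strict inequality for all points off the support-preserving slice, which we reuse below.

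\emph{Strictness.} By the above, strictness reduces to $\hat\x_{\hat\sigma}$ being a strict local (equivalently unique, by convexity) minimizer of $G$.
\begin{itemize}
\item If $\lambda_2>0$, then $G$ is strongly convex, so the minimizer is unique.
\item If $\A_{\hat\sigma}$ has full column rank, uniqueness is the main obstacle, since $f(\cdot;y)$ is only assumed convex, not strictly. For least squares, KL, and logistic losses $f$ is strictly convex, so $\z\mapsto F_\y(\A_{\hat\sigma}\z)$ is strictly convex when $\A_{\hat\sigma}$ is injective. I would invoke this via the twice-differentiability of $f$ with $f''>0$ on the domain, which is implicitly needed here.
\item Conversely, if $\lambda_2=0$ and $\A_{\hat\sigma}$ has a nontrivial kernel vector $\mathbf{d}$, then $G(\hat\x_{\hat\sigma}+t\mathbf{d})=G(\hat\x_{\hat\sigma})$. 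For small $|t|$ the point stays in $\Cc^{\sharp\hat\sigma}$ because $\hat x_n\neq0$ on $\hat\sigma$, and its support is unchanged. This gives non-strictness.
\end{itemize}

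\emph{Global minimizers are strict.} Suppose a global minimizer $\hat\x$ had $\lambda_2=0$ and $\A_{\hat\sigma}$ rank-deficient. Move along $\hat\x_{\hat\sigma}+t\mathbf{d}$, with $\mathbf{d}\in\ker\A_{\hat\sigma}$, until the first coordinate hits zero; this must happen in some direction $\pm\mathbf{d}$, and the constraint $\Cc\subseteq\R$ being an orthant keeps feasibility up to that point. The resulting point has the same data term and at least one fewer nonzero, so $J_0$ strictly decreases. This contradicts global optimality, so $\A_{\hat\sigma}$ must be full rank.
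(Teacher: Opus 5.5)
The paper gives no proof of this proposition; it only recalls it from \citep{Thi2015,essafri2024exact}, so there is no in-paper argument to compare against. Judged on its own, your proof is the standard one and is correct. The choice $\delta<\min_{n\in\hat\sigma}|\hat x_n|$ splits a neighbourhood of $\hat{\x}$ into two parts. On the slice $\sigma(\x)=\hat\sigma$, $J_0$ equals $G$ plus the constant $\lambda_0\sharp\hat\sigma$. On the points with strictly larger support, the jump of at least $\lambda_0$ dominates the variation of the smooth part. Continuity of that part holds if $f$ is finite-valued, as Assumption~\ref{ass:Fy} literally states; otherwise lower semicontinuity suffices, as you say. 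Convexity of $G$ then turns local into global minimality on the slice, and strict local minimality into uniqueness. Both kernel-direction arguments are also sound: non-strictness when $\lambda_2=0$ and $\A_{\hat\sigma}$ is rank deficient, and the boundary-hitting argument for global minimizers, including feasibility when $\Cc=\R_{\geq 0}$.

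The extra hypothesis you invoke is not a weakness of your argument but a genuine omission in the statement. Assumption~\ref{ass:Fy} only requires convexity, and under it the ``if'' direction of the strictness claim and the claim that global minimizers are strict are both false. Take $M=N=1$, $\A=1$, $\lambda_2=0$, $\Cc=\R$, $y=10$, $f(z;y)=\max(|z-y|-1,0)^3$ (convex, $C^2$, nonnegative) and $\lambda_0<729$. Then every point of $[9,11]$ is a global minimizer of $J_0$ and none is strict, although $\A_{\hat\sigma}$ has full rank. So strict convexity of $\z\mapsto F_\y(\A_{\hat\sigma}\z)$ has to be assumed, and ``full rank'' must be read as full column rank, as you do.

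One correction to your aside: the KL loss is not strictly convex when $y_m=0$, since then $f(z;0)=(z+b_m)/M$ is affine. With zero counts $f''>0$ fails, and mere injectivity of $\A_{\hat\sigma}$ does not suffice. A sufficient condition is instead injectivity of the submatrix of $\A_{\hat\sigma}$ formed by the rows with $y_m>0$.
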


A key observation is  that~\eqref{eq:locMinJ0} is independent of \(\lambda_0\). Consequently, the set of local minimizers of \(J_0\) remains unchanged with respect to \(\lambda_0\). Then, given a local minimizer \(\hat{\x}\) of \(J_0\), the objective value \(J_0(\hat{\x})\) is an affine function of \(\lambda_0\) on \(\mathbb{R}_{\geq 0}\), with slope  \(\|\hat{\x}\|_0\) and  intercept  \((F_\y(\A\hat{\x}) + \frac{\lambda_2}{2}\|\hat{\x}\|^2)\),
\begin{equation}\label{eq:affine-lam0}
    \lambda_0 \mapsto  (F_\y(\A\hat{\x}) + \frac{\lambda_2}{2}\|\hat{\x}\|^2) + \lambda_0 \, \|\hat{\x}\|_0.
\end{equation}
An illustration is provided in Figure~\ref{fig:l0curve-original}, where each affine function corresponds to a strict local minimizer of $J_0$ for a problem of size \((3,4)\). The lower concave envelope of these affine functions is known as the \textit{\(\ell_0\)-curve}~\citep{SoussenPath:15} and is associated to points of the \textit{$\ell_0$-path}.  The breakpoints (denoted \(\hat{\lambda}_0^i, \; i \in \{1,2,3\}\), in Figure~\ref{fig:l0curve-original}) correspond to the  \(\lambda_0\) values at which the support of the global minimizer changes. While the $\ell_0$-curve is piecewise affine, the $\ell_0$-path is piecewise constant with respect to \(\lambda_0\)~\citep{SoussenPath:15}. One can observe that all local minimizers \(\hat{\x}\) of \(J_0\) satisfying \(\|\hat{\x}\|_0 = 3\) minimize the data-fidelity term alone, as the corresponding sub-matrices $3\times 3$ are full-rank. Consequently, the corresponding affine functions in the \((\lambda_0, J_0)\)-plane are identical. This behaviour was analysed in detail by~\citet[Section 3.3]{nikolova2013description} for the least-squares case.

The affine and constant piecewise structure of the $\ell_0$-curve and $\ell_0$-path, as established in~\citep{SoussenPath:15} for the least-squares setting, extends naturally to our more general framework with non-quadratic fidelities.

\begin{figure}[t]
    \centering
    \includegraphics[width=0.9\textwidth]{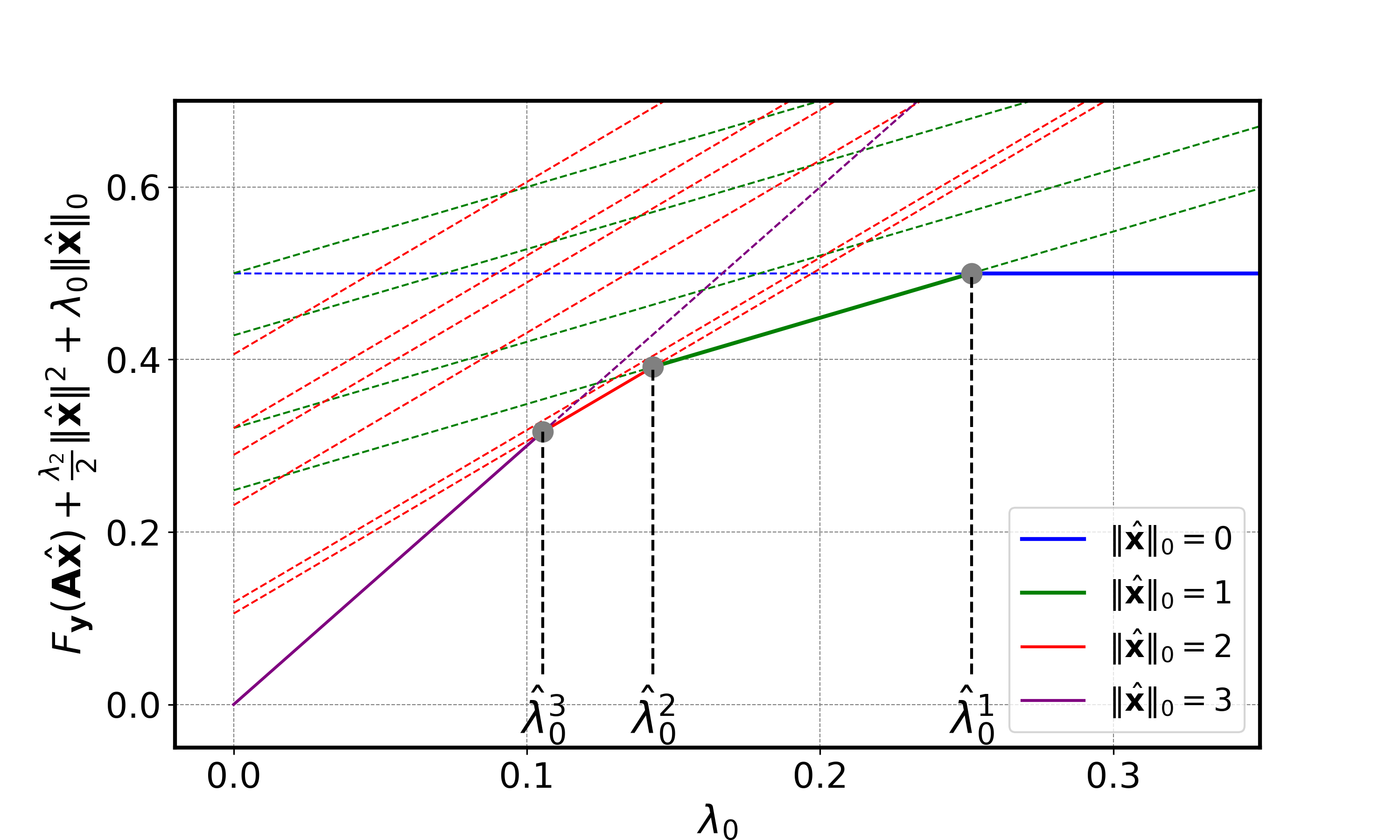}
\caption{The $\ell_0$-curve. Plot of the affine functions
$\lambda_0 \mapsto (F_\y(\A\hat{\x}) + \frac{\lambda_2}{2}\|\hat{\x}\|^2) + \lambda_0 \|\hat{\x}\|_0$
for all fifteen (strict local) minimizers $\hat{\x}$ of a $\ell_0$-regularized least-squares problem ($F_\y(\A \cdot) = \tfrac{1}{2} \|\A\cdot - \y\|^2$, $\lambda_2=0$) of size $(M, N) = (3,4)$. The $\ell_0$-curve corresponds to the lower concave envelope of these affine functions, and is represented by the solid line. It is piecewise linear, with breakpoints $(\hat{\lambda}_0^1, \hat{\lambda}_0^2, \hat{\lambda}_0^3)$ that delineate the intervals over which the global minimizer remains unchanged ($\ell_0$-path).
}\label{fig:l0curve-original}
\end{figure}

\subsection{Background on $\ell_0$ Bregman Relaxations}\label{sec:Brex}

We now review the \(\ell_0\) Bregman relaxations (\BR{}) introduced in~\citep{essafri2024exact}. This relaxation replaces the discontinuous \(\ell_0\) term in Problem~\eqref{eq:problem_setting} with a nonconvex continuous penalty,  preserving all global minimizers of \(J_0\), while removing some local ones. 

\begin{definition}[$\ell_0$ Bregman relaxation]\label{def:brex}
Consider a family of scalar functions  $\Psi = \left\lbrace \psi_n : \Cc \mapsto \R \right\rbrace_{n \in [N]}$ such that 
\begin{enumerate}[label=\roman*.]
    \item $\psi_n$ is strictly convex, proper, and twice-differentiable over $\operatorname{int}(\Cc)$,
    \item the map $x \mapsto \psi_n'(x)x - \psi_n(x)$ is coercive,
     \item when $\Cc = \R$, $\psi_n'$ is odd.
\end{enumerate}
 Then, the \emph{\BR{} penalty} is defined as
\begin{equation}\label{eq:brex-original-formula}
 B_\Psi(\x): = \sup_{\alpha\in \mathbb{R}} \; \sup_{\z \in \operatorname{int}(\Cc^N)}~ \left\lbrace \alpha- D_{\Psi}(\x,\z): \alpha -  D_{\Psi}(\cdot,\z) \leq \lambda_0 \|\cdot\|_0  \right\rbrace
 ,
\end{equation}
where $D_{\Psi} : \Cc^N \times \mathrm{int}(\Cc^N) \to \R_{\geq 0}$ is the separable Bregman divergence~\citep{Bregman1967} associated with $\Psi$,
\begin{equation}\label{eq:bregman-distance} D_\Psi(\x,\z) = \sum_{n=1}^N d_{\psi_n}(x_n,z_n) \quad \text{ with } \quad d_{\psi_n}(x,z) = \psi_n(x) - \psi_n(z) - \psi_n'(z) (x-z)
.
\end{equation}
\end{definition}


As shown in \citep[Proposition 5]{essafri2024exact},  the \BR{} penalty is separable, $B_\Psi(\x) = \sum_{n=1}^N \beta_{\psi_n}(x_n)$, and we can derive a closed-form expression of the scalar functions $\beta_{\psi_n}$. For all $x \in \Cc$
\begin{equation}\label{eq:brex-generic-formula} 
 \beta_{\psi_n}(x) = \left\lbrace 
    \begin{array}{ll}
        \psi_n(0) - \psi_n(x) +  \psi_n'(\alpha_n)|x| \quad &\quad  \text{ if } |x| \leq \alpha_n, \\
      \lambda_0  \quad  & \quad  \text{ otherwise.}
    \end{array}
    \right.
\end{equation}
where $\alpha_n > 0$ is the unique (see Lemma~\ref{lem:alpha-increasing} in the Appendix) positive point satisfying $$\lambda_ 0 = \psi_n(0) - \psi_n(\alpha_n) + \psi'_n(\alpha_n)\alpha_n \;\, \left( = d_{\psi_n}(0, \alpha_n)\right).$$
This parameter can be computed explicitly for several common generating functions, as detailed in~\citep[Table~3]{essafri2024exact}.

\begin{remark}
    Condition iii. in Definition~\ref{def:brex} can be relaxed. Without this condition, \BR{} loses its symmetry, requiring the introduction of two distinct parameters \(\alpha_n^- \leq 0\) and \(\alpha_n^+ \geq 0\), as done in~\citep{essafri2024exact}. However, since symmetric penalties are typically considered in practice, we include Condition iii. in this work to simplify the presentation.
\end{remark}

Replacing the $\ell_0$ pseudo-norm in \eqref{eq:problem_setting} with the penalty $B_\Psi$ yields the following continuous relaxation of $J_0$:
\begin{equation}\label{eq:relaxation}
    J_\Psi(\x) = F_\y(\A\x) + B_\Psi(\x)  + \frac{\lambda_2}{2}\|\x\|^2.
\end{equation}
For suitable choices of $\Psi$, this relaxation is exact in the sense given by the following theorem.

\begin{theorem}[Theorem 9 in \citet{essafri2024exact}]\label{th:exact}
    Let $J_\Psi$ be defined as in~\eqref{eq:relaxation}. If for all $n \in[N]$ and $\x \in \Cc^N$ the following condition holds : $\forall t \in (-\alpha_n, 0) \cup (0, \alpha_n)$,
\begin{equation}\label{eq:cc-condition}
\frac{\partial^2}{\partial t^2 } J_\Psi(\x^{(n)}+t\e_n)= 
\sum_{m \in [M]} a_{mn}^2 f''([\A \x^{(n)}]_m + ta_{mn};y_m) 
 -\psi_n''(t) + \lambda_2 < 0 \;  \; 
 \tag{CC}
\end{equation}
then~$J_\Psi$ is an \emph{exact relaxation} of $J_0$, i.e., over $\Cc^N$, the set of global minimizers of $J_\Psi$ and $J_0$ coincide, and local minimizers of $J_\Psi$ are local minimizers of $J_0$.
\end{theorem}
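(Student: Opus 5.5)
The proof rests on two facts: $B_\Psi$ lies below $\lambda_0\|\cdot\|_0$ with equality off the strips $0<|x_n|<\alpha_n$, and \eqref{eq:cc-condition} makes $J_\Psi$ strictly concave along each coordinate inside those strips. Together these push minimizers of $J_\Psi$ out of the strips, where $J_\Psi=J_0$.

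\textbf{Step 1 (minorization and coincidence).} By construction \eqref{eq:brex-original-formula}, $B_\Psi$ is a supremum of functions bounded above by $\lambda_0\|\cdot\|_0$, so $B_\Psi\le \lambda_0\|\cdot\|_0$ and hence $J_\Psi\le J_0$ on $\Cc^N$. From the closed form \eqref{eq:brex-generic-formula}, $\beta_{\psi_n}(0)=0$ and $\beta_{\psi_n}(x)=\lambda_0$ for $|x|\ge\alpha_n$. At $|x|=\alpha_n$ this uses the defining relation of $\alpha_n$ together with $\psi_n(\pm\alpha_n)$ being equal, by oddness of $\psi_n'$. Therefore $J_\Psi(\x)=J_0(\x)$ whenever no coordinate satisfies $0<|x_n|<\alpha_n$.

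\textbf{Step 2 (local minimizers of $J_\Psi$ avoid the strips).} Let $\hat\x$ be a local minimizer of $J_\Psi$ and suppose $0<|\hat x_n|<\alpha_n$ for some $n$. Consider $g(t)=J_\Psi(\hat\x^{(n)}+t\e_n)$. On $(0,\alpha_n)$ (or $(-\alpha_n,0)$), $\beta_{\psi_n}$ is twice differentiable with second derivative $-\psi_n''(t)$, and $f$ is $C^2$. So $g''(t)$ is exactly the expression in \eqref{eq:cc-condition}, which is negative. Then $g$ is strictly concave near $\hat x_n$, and $\hat x_n$ cannot be a local minimizer of $g$, a contradiction. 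Hence $\hat x_n=0$ or $|\hat x_n|\ge\alpha_n$ for all $n$, and $J_\Psi(\hat\x)=J_0(\hat\x)$.

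\textbf{Step 3 (local minimizers of $J_\Psi$ are local minimizers of $J_0$).} Take a neighbourhood $\mathcal V$ of $\hat\x$ on which $\hat\x$ minimizes $J_\Psi$. For $\x\in\mathcal V$ we have $J_0(\x)\ge J_\Psi(\x)\ge J_\Psi(\hat\x)=J_0(\hat\x)$, so $\hat\x$ is a local minimizer of $J_0$.

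\textbf{Step 4 (global minimizers coincide).} The expected main obstacle is existence of a global minimizer of $J_\Psi$; I would handle it by a projection construction rather than coercivity. Given any $\x$, repeatedly apply coordinatewise moves. For each $n$ with $0<|x_n|<\alpha_n$, concavity of $g$ on $[0,\alpha_n]$ (by continuity up to the endpoints) means the minimum over that segment is attained at an endpoint. Moving to that endpoint does not increase $J_\Psi$ and does not re-create strip coordinates among those already treated. This yields $\tilde\x$ with $J_0(\tilde\x)=J_\Psi(\tilde\x)\le J_\Psi(\x)$, and therefore
\[
\inf J_\Psi=\inf J_0 .
\]
Now let $\hat\x$ be a global minimizer of $J_0$. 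Then $J_\Psi(\hat\x)\le J_0(\hat\x)=\inf J_0=\inf J_\Psi$, so $\hat\x$ globally minimizes $J_\Psi$. This also gives existence, assuming $J_0$ attains its minimum as in the paper's setting.

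Conversely, let $\hat\x$ be a global minimizer of $J_\Psi$. Step 2 applies, so $J_0(\hat\x)=J_\Psi(\hat\x)=\inf J_0$, and $\hat\x$ globally minimizes $J_0$.

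In the case $\Cc=\R_{\ge0}$, the same arguments hold with only the positive strip $(0,\alpha_n)$ considered.
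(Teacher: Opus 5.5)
The paper does not prove this theorem itself; it imports it from \citet{essafri2024exact}. Your argument is correct and uses the standard mechanism behind that result, which is also the one the paper invokes in its macro-algorithm discussion: $J_\Psi\le J_0$ with equality off the strips $0<|x_n|<\alpha_n$, and \eqref{eq:cc-condition} forcing strict coordinatewise concavity inside them, so local minimizers avoid the strips and any point can be pushed out of them without increasing $J_\Psi$, whence $\inf J_\Psi=\inf J_0$. One minor simplification: the converse direction in Step 4 does not need the infimum equality, since Step 2 already gives $J_0(\hat{\x})=J_\Psi(\hat{\x})\le J_\Psi(\x)\le J_0(\x)$ for every $\x\in\Cc^N$.
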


From Theorem~\ref{th:exact}, we deduce that \(J_\Psi\) may eliminate local (not  global) minimizers of \(J_0\), while preserving all global ones.

To conclude this section, we recall in Propositions~\ref{prop:crit-pts-jpsi} and~\ref{prop:loc-min-jpsi} the characterizations of both critical points and local minimizers of $J_\Psi$ given in~\citep[Proposition 7 and Corollary 3]{essafri2024exact}.

\begin{proposition}[Critical points of $J_\Psi$]\label{prop:crit-pts-jpsi} 
   A  point $\hat{\x} \in \Cc^N$ is a critical point of $J_\Psi$ \emph{if and only if}, for all $n \in [N]$,  
   \begin{equation}
     \begin{cases}
     \varphi_n\!\left(\psi_n'(0) - \left\langle \a_n , \nabla F_\y(\A \hat{\x}) \right\rangle \right) \leq \psi_n'(\alpha_n)  & \text{if } \hat{x}_n = 0,
 \\
 \left\langle \a_n , \nabla F_\y(\A \hat{\x}) \right\rangle + \lambda_2 \hat{x}_n  - \psi_n'(\hat{x}_n) + \mathrm{sign}(\hat{x}_n) \psi_n'(\alpha_n)= 0 & \text{if }  0 < |\hat{x}_n| \leq  \alpha_n, \\
 \left\langle \a_n , \nabla F_\y(\A \hat{\x}) \right\rangle + \lambda_2 \hat{x}_n = 0 & \text{if } |\hat{x}_n| > \alpha_n,
   \end{cases}
   \end{equation}
   where $\varphi_n = |\cdot|$ when $\Cc = \R$ and $\varphi_n = \max(\cdot,\psi'_n(0))$ when $\Cc = \R_{\geq 0}.$
\end{proposition}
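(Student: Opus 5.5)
We prove the characterization by computing the (limiting/Clarke) subdifferential of $J_\Psi$ coordinatewise and writing the stationarity condition $\mathbf{0} \in \partial J_\Psi(\hat\x) + N_{\Cc^N}(\hat\x)$. Since $F_\y(\A\cdot)+\frac{\lambda_2}{2}\|\cdot\|^2$ is differentiable on its domain and $B_\Psi$ is separable, $B_\Psi(\x)=\sum_n \beta_{\psi_n}(x_n)$, the subdifferential splits. Its smooth part has $n$th component $\langle \a_n,\nabla F_\y(\A\hat\x)\rangle + \lambda_2 \hat x_n$. Stationarity then reduces to $N$ scalar inclusions
\[
0 \in \langle \a_n,\nabla F_\y(\A\hat\x)\rangle + \lambda_2\hat x_n + \partial \beta_{\psi_n}(\hat x_n) + N_{\Cc}(\hat x_n).
\]
The first task is therefore to compute $\partial\beta_{\psi_n}$ from the closed form \eqref{eq:brex-generic-formula}.

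For $|x|>\alpha_n$ we have $\beta_{\psi_n}\equiv\lambda_0$, so the derivative is $0$; this gives the third case. For $0<|x|<\alpha_n$ we have $\beta_{\psi_n}'(x) = -\psi_n'(x)+\mathrm{sign}(x)\psi_n'(\alpha_n)$, which gives the second case. At $|x|=\alpha_n$ the two one-sided derivatives agree: the inner derivative is $-\psi_n'(\alpha_n)+\psi_n'(\alpha_n)=0$, using the oddness of $\psi_n'$ for negative $x$. So $\beta_{\psi_n}$ is $C^1$ there, and the boundary point can be assigned to either formula (the statement uses $\le\alpha_n$). Continuity at $\pm\alpha_n$ follows from the defining equation of $\alpha_n$.

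At $x=0$ the one-sided derivatives are $-\psi_n'(0)\pm\psi_n'(\alpha_n)$, using $\psi_n'(-t)=-\psi_n'(t)$ in the case $\Cc=\R$. For $\Cc=\R$, oddness also gives $\psi_n'(0)=0$, and the function is locally a convex kink plus a smooth term. Its subdifferential is therefore $-\psi_n'(0)+[-\psi_n'(\alpha_n),\psi_n'(\alpha_n)]$, noting that $\psi_n'(\alpha_n)>\psi_n'(0)$ by strict convexity. The inclusion then reads $|\psi_n'(0)-\langle\a_n,\nabla F_\y\rangle|\le\psi_n'(\alpha_n)$.

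For $\Cc=\R_{\ge0}$, only the right derivative $-\psi_n'(0)+\psi_n'(\alpha_n)$ matters, and $N_{\Cc}(0)=\R_{\le 0}$. The condition becomes $\langle\a_n,\nabla F_\y\rangle-\psi_n'(0)+\psi_n'(\alpha_n)\ge0$, i.e.\ $\psi_n'(0)-\langle\a_n,\nabla F_\y\rangle\le\psi_n'(\alpha_n)$. Since $\psi_n'(0)<\psi_n'(\alpha_n)$ always holds, this is equivalent to $\max(\psi_n'(0)-\langle\a_n,\nabla F_\y\rangle,\psi_n'(0))\le\psi_n'(\alpha_n)$, which matches $\varphi_n$.

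The main obstacle is making the notion of critical point precise at $x_n=0$, where $\beta_{\psi_n}$ is nonsmooth and nonconvex globally. I would use the limiting (or Clarke) subdifferential, following \citep{essafri2024exact}. Locally, $\beta_{\psi_n}$ equals a concave $C^2$ function plus $\psi_n'(\alpha_n)|x|$, and both subdifferentials coincide with the sum rule in that case, so the computation above is rigorous. I would also check that, when $\psi_n$ is not differentiable at $0$ on $\Cc=\R_{\ge0}$, $\psi_n'(0)$ is understood as the right derivative, possibly $-\infty$, with the conventions of the original paper.
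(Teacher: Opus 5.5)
Your argument is correct. The paper gives no proof of its own here; it imports the result from \citep[Proposition~7]{essafri2024exact}. Your route is the standard argument behind that result: separability of $B_\Psi$, the smooth sum rule, and the coordinatewise subdifferential of $\beta_{\psi_n}$. That subdifferential is $C^1$ away from $0$ (including at $|x|=\alpha_n$), a concave $C^2$ term plus the convex kink $\psi_n'(\alpha_n)|x|$ at $0$, and picks up the normal cone $\R_{\leq 0}$ when $\Cc=\R_{\geq 0}$. You also supply the needed details: the second and third cases agree at $|\hat x_n|=\alpha_n$, and $\psi_n'(0)<\psi_n'(\alpha_n)$ makes the $\max(\cdot,\psi_n'(0))$ in $\varphi_n$ harmless.
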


\begin{proposition}[Characterization of local minimizers of $J_\Psi$]\label{prop:loc-min-jpsi}  A point $\hat{\x} \in \Cc^N$ is a local minimizer of $J_\Psi$ \emph{if and only if} it is a critical point of $J_\Psi$ and, for all $n\in \sigma(\hat{\x})$, $|\hat{x}_n| > \alpha_n$. This is equivalent to
\begin{align}
& \forall n \in \sigma^c(\hat{\x}), \quad 
\varphi_n\!\left(\psi_n'(0) - \left\langle \a_n , \nabla F_\y(\A \hat{\x}) \right\rangle \right) 
\leq \psi_n'(\alpha_n), 
\label{eq:condition1} \\
& \forall n \in \sigma(\hat{\x}), \quad 
\begin{cases}
\left\langle \a_n , \nabla F_\y(\A \hat{\x}) \right\rangle + \lambda_2 \hat{x}_n = 0, \\
|\hat{x}_n| > \alpha_n.
\end{cases}
\label{eq:condition2}
\end{align}

\end{proposition}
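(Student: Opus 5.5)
The plan is to work under the standing assumption (CC) of Theorem~\ref{th:exact}, which is the setting of \citep[Corollary 3]{essafri2024exact}. Both directions reduce to one-dimensional restrictions
\[
g_n(t) := J_\Psi(\hat{\x}^{(n)} + t\e_n),
\]
combined with the convexity of $\x\mapsto F_\y(\A\x)+\frac{\lambda_2}{2}\|\x\|^2$. First I will record two facts read off from~\eqref{eq:brex-generic-formula}:
\begin{itemize}
\item $\beta_{\psi_n}$ is $C^1$ away from $0$. Indeed, at $|x|=\alpha_n$ the inner derivative is $-\psi_n'(\alpha_n)+\psi_n'(\alpha_n)=0$, which matches the zero derivative of the constant branch, and continuity there follows from the definition of $\alpha_n$.
\item $\beta_{\psi_n}$ equals the constant $\lambda_0$ on $\{|x|>\alpha_n\}$.
\end{itemize}
The equivalence between ``critical point with $|\hat{x}_n|>\alpha_n$ on the support'' and \eqref{eq:condition1}--\eqref{eq:condition2} is then immediate from Proposition~\ref{prop:crit-pts-jpsi}: the middle case of that proposition is excluded, and the third case gives the first line of \eqref{eq:condition2}.

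\emph{Necessity.} A local minimizer is a critical point. Suppose that for some $n\in\sigma(\hat{\x})$ we had $0<|\hat{x}_n|\le\alpha_n$; by symmetry (Condition iii.) take $\hat{x}_n\in(0,\alpha_n]$. Criticality gives $g_n'(\hat{x}_n)=0$, the derivative being two-sided thanks to the $C^1$ property above. By (CC), $g_n''<0$ on $(0,\alpha_n)$, so $g_n'(t)>0$ for $t$ slightly below $\hat{x}_n$. Hence $g_n(t)<g_n(\hat{x}_n)$ for such $t$, which contradicts local minimality. The case $\hat{x}_n<\alpha_n$ is even simpler, since $g_n$ is then strictly concave around $\hat{x}_n$.

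\emph{Sufficiency.} Take a neighbourhood $\mathcal{V}$ of $\hat{\x}$ small enough that $|x_n|>\alpha_n$ for every $n\in\sigma(\hat{\x})$. On $\mathcal{V}$ the penalty is constant on the support coordinates, so for $\x\in\mathcal{V}$, convexity of the smooth part gives
\[
J_\Psi(\x)-J_\Psi(\hat{\x}) \ge \sum_{n\notin\sigma(\hat{\x})}\Big(\langle \a_n,\nabla F_\y(\A\hat{\x})\rangle x_n + \beta_{\psi_n}(x_n)-\beta_{\psi_n}(0)\Big).
\]
Here I have used that the support terms of the gradient vanish by \eqref{eq:condition2}. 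For each off-support coordinate with $|x_n|\le\alpha_n$, I expand
\[
\beta_{\psi_n}(t)-\beta_{\psi_n}(0)=\psi_n'(\alpha_n)|t|-\psi_n'(0)t-d_{\psi_n}(t,0).
\]
Inequality \eqref{eq:condition1} then says exactly that the linear part $\big(\psi_n'(\alpha_n)|t| - (\psi_n'(0)-\langle\a_n,\nabla F_\y(\A\hat{\x})\rangle)t\big)$ is nonnegative; for $\Cc=\R_{\geq 0}$ one only needs $t\ge0$, which is what the function $\varphi_n$ encodes. When \eqref{eq:condition1} is strict, this linear term dominates the $O(t^2)$ remainder $d_{\psi_n}(t,0)$ near $0$, and the sum is nonnegative.

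The main obstacle is the equality case in \eqref{eq:condition1}. There the first-order term vanishes and the negative quadratic $-d_{\psi_n}(t,0)$ has to be compared with the curvature of the data term. Expanding only coordinate-by-coordinate is too lossy for this, and in fact (CC) makes $g_n$ strictly concave on $(0,\alpha_n)$ with $g_n'(0^+)=0$, so such a point would not be a local minimizer. I would therefore handle this case directly with the one-dimensional concavity argument rather than through the convexity bound. I would then check whether, under the hypotheses of \citep{essafri2024exact}, the boundary case is either excluded or correctly absorbed by the form of $\varphi_n$ and the non-strict inequality. This boundary analysis is the step I expect to require the most care; the rest is bookkeeping using Proposition~\ref{prop:crit-pts-jpsi}.
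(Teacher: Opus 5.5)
Your necessity argument (strict concavity from (CC), plus the $C^1$ junction of $\beta_{\psi_n}$ at $\pm\alpha_n$) is sound, and so is your sufficiency argument when the off-support inequality is strict. The latter uses the gradient inequality for the convex part $F_\y(\A\cdot)+\frac{\lambda_2}{2}\|\cdot\|^2$, then lets a linear term dominate $d_{\psi_n}(t,0)=o(|t|)$. But the proof is not finished. You leave open the case of equality in $\varphi_n(\psi_n'(0)-\langle\a_n,\nabla F_\y(\A\hat\x)\rangle)\le\psi_n'(\alpha_n)$, hoping it is excluded or absorbed by $\varphi_n$. It is neither.

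For $\Cc=\R_{\geq 0}$, the $\max(\cdot,\psi_n'(0))$ in $\varphi_n$ only acts below $\psi_n'(0)<\psi_n'(\alpha_n)$, so it is irrelevant at equality. Your own observation then settles the case negatively. At equality, the one-sided derivative of $t\mapsto J_\Psi(\hat\x+t\e_n)$ at $0$ vanishes: to the right if the argument of $\varphi_n$ equals $+\psi_n'(\alpha_n)$, to the left if it equals $-\psi_n'(\alpha_n)$. By (CC) this function is strictly concave on that side, so it lies strictly below its horizontal tangent $J_\Psi(\hat\x)$ there. Such an $\hat\x$ is therefore a critical point satisfying both displayed conditions without being a local minimizer.

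A concrete instance: $M=N=1$, $\A=1$, $F_\y(z)=\frac12(z-y)^2$, $\lambda_2=0$, $\Cc=\R$, $\psi(x)=x^2$ (so (CC) reads $1-2<0$), $\lambda_0=1$ (so $\alpha=1$, $\psi'(\alpha)=2$), and $y=2$. Then $\hat x=0$ meets the off-support condition with equality, yet $J_\Psi(t)=2-\frac{t^2}{2}<J_\Psi(0)$ for all $t\in(0,1]$.

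Hence the ``if'' direction cannot be proved as stated. The statement, which the paper quotes from its reference without proof, needs a strict inequality off the support. Your plan proves exactly that corrected characterization, provided you also move the off-support concavity argument into the necessity direction. Invoking criticality alone there only yields the non-strict inequality, whereas a local minimizer must satisfy it strictly.

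The same boundary effect reaches Theorem~\ref{th:min-loc-removed}. When $\underline{\lambda}_0(\hat\x)>0$, choosing $\lambda_0=\underline{\lambda}_0(\hat\x)$ produces equality for the maximizing index. The interval there should therefore be open at its left end.
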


Beyond these characterizations of critical points and local minimizers, the optimization landscape of $J_\Psi$ has been analysed by~\citet{carlsson2020unbiased} (least-squares case) and \citet{chirinos2025optimization} (general case).

\section{The \Algo{} Algorithm for $\ell_0$-Path Computation}\label{sec:main_alg}

In this section, we present \Algo, an algorithm designed to estimate the $\ell_0$-path by leveraging the favourable properties of \BR{}. To that end, we make the following non-restrictive assumption on the family of functions $\Psi$. Note that this assumption is introduced solely for streamlining the presentation (see Remark~\ref{rem:comment_ass_unif_param_psi}) and it is naturally satisfied by standard choices, as discussed in the examples below.

\begin{assume}\label{ass:unif_param_psi}
There exists a parameter space $\mathcal P$ and a family of relaxations ${\Psi_{\mathbf p}}, {\mathbf p \in \mathcal P}$ such that one can choose $\mathbf p \in \mathcal P$ independently of $\lambda_0$ while ensuring that Condition~\eqref{eq:cc-condition} holds uniformly for all $\lambda_0 \geq 0$.
\end{assume}

\begin{example}\label{ex:PsiL2}
Let $\psi_n(x) = \frac{\gamma_n}{2}x^2$ and {the $f(\cdot;y_m)$ have a Lipschitz continuous derivative with constant $L_m$}. Then  $\mathbf{p} = (\gamma_n)_{n\in [N]} \in \R_{>0}^N$ and a  sufficient condition for~\eqref{eq:cc-condition} is then  given~by $$\gamma_n > \lambda_2 + \sum_{m \in [M]} a_{mn}^2 L_m,$$ which is independent of~$\lambda_0$.
\end{example}

{
\begin{example}\label{ex:PsiKL}
    Let $d_\mathrm{KL}(y;z) = z + y \log(y/z) - y$, $f(z_m;y_m) = d_\mathrm{KL}(y_m; z_m  + b_m) / M$ with $b_m >0$  and  $\psi_n(x) = \gamma_n d_{\mathrm{KL}}(\xi ; c_n x + \xi )$. This choice corresponds to a Kullback-Leibler data fidelity with tailored $\psi_n$ (see Section~\ref{sec:expeKL}). Then $\mathbf{p} = ((\gamma_n)_{n \in [N]}, (c_n)_{n \in [N]}, \xi) \in \R_{>0}^{2N+1}$ and sufficient conditions for~\eqref{eq:cc-condition} are then  given by~\citep{essafri2024l0,chirinos2025optimization}
    $$
        c_n = \min_{m \in \sigma(\a_n)} a_{mn}, \quad \gamma_n > \sum_{m \in [M]} \frac{a_{mn}^2 y_m}{ M c_n^2 \xi}, \quad \xi \leq \min_{m \in [M]} b_m,
    $$
    which are independent of $\lambda_0$.
\end{example}}

\subsection{Revisiting the $\ell_0$-Path: Insights from \BR{}}

As shown in Section~\ref{sec:l0-curve-background}, the set of local minimizers of \(J_0\) is invariant with respect to \(\lambda_0\). Consequently, each local minimizer defines a line in the \((\lambda_0, J_0)\)-plane, as depicted in Figure~\ref{fig:l0curve-original}.
On the other hand, Theorem~\ref{th:exact} and Proposition~\ref{prop:loc-min-jpsi} reveal that the exact relaxation \(J_\Psi\) eliminates some local (not global) minimizers of $J_0$. Thus, the set of local minimizers of \(J_\Psi\)  is a subset of the set of local minimizers of \(J_0\), which can be expected to vary with \(\lambda_0\). This property is formalized in Theorem~\ref{th:min-loc-removed}, which establishes that any local minimizer of \(J_\Psi\) maintains such property for a range of \(\lambda_0\) values.  The proof can be found in Appendix~\ref{app:th-min-loc}.

\begin{theorem}\label{th:min-loc-removed}
Let \( \hat{\x} \in \Cc^N \) be a local minimizer of \( J_\Psi \) and set $g_n =  \psi_n'(0)-\left<\a_n ,\nabla F_\y\left(\A\hat{\x} \right) \right>$. Then, there exist two bounds
   \begin{align}
        & \bar{\lambda}_0(\hat \x) = \min_{n \in \sigma(\hat{\x})} \;d_{\psi_n}(0,|\hat{x}_n|) \qquad \text{(or } + \infty \text{ if } \sigma(\hat{\x}) = \emptyset, \text{i.e., } \hat{\x} = \mathbf{0}\text{),} \label{eq:lamBar} \\
        & \underline{\lambda}_0(\hat \x) = \max_{n \in \sigma^c(\hat{\x})} \; d_{\psi_n}\left(0,(\psi_n')^{-1} \left(\varphi_n\left( g_n\right) \right) \right) \qquad \text{(or } 0 \text{ if } \sigma^c(\hat{\x}) = \emptyset\text{),} \label{eq:lamUbar}
    \end{align}
    such that  for all \( \lambda_0 \in [\underline{\lambda}_0(\hat \x), \bar{\lambda}_0(\hat \x)) \), \( \hat{\x} \) is \emph{still} a local minimizer of \( J_\Psi \).  
\end{theorem}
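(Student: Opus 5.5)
The plan is to use the characterization of local minimizers in Proposition~\ref{prop:loc-min-jpsi} and track which of its conditions depend on $\lambda_0$. The key observation is that $\lambda_0$ enters \eqref{eq:condition1}--\eqref{eq:condition2} only through the thresholds $\alpha_n = \alpha_n(\lambda_0)$, defined implicitly by $\lambda_0 = d_{\psi_n}(0,\alpha_n)$. The quantities $\nabla F_\y(\A\hat\x)$, the equations $\langle \a_n,\nabla F_\y(\A\hat\x)\rangle + \lambda_2\hat x_n = 0$ for $n\in\sigma(\hat\x)$, and hence the numbers $g_n$, are all independent of $\lambda_0$. So $\hat\x$ remains a local minimizer at a new $\lambda_0$ exactly when two things hold: (a) $|\hat x_n| > \alpha_n(\lambda_0)$ for all $n\in\sigma(\hat\x)$, and (b) $\varphi_n(g_n)\le \psi_n'(\alpha_n(\lambda_0))$ for all $n\in\sigma^c(\hat\x)$.

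The first step is a monotonicity lemma (Lemma~\ref{lem:alpha-increasing}). The map $h_n : t\mapsto d_{\psi_n}(0,t) = \psi_n(0)-\psi_n(t)+\psi_n'(t)t$ on $t>0$ has derivative $h_n'(t) = t\psi_n''(t) > 0$, by strict convexity (at least almost everywhere, which suffices for strict increase). Together with coercivity, Definition~\ref{def:brex}\,ii, this makes $h_n$ a strictly increasing bijection onto its range. Hence $\alpha_n(\lambda_0) = h_n^{-1}(\lambda_0)$ is strictly increasing in $\lambda_0$. Moreover $\psi_n'$ is strictly increasing, so $\lambda_0\mapsto\psi_n'(\alpha_n(\lambda_0))$ is strictly increasing as well.

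Next I translate (a) and (b) into bounds on $\lambda_0$.

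For (a): since $h_n$ is increasing, $|\hat x_n| > \alpha_n(\lambda_0)$ is equivalent to $h_n(|\hat x_n|) > \lambda_0$, that is $\lambda_0 < d_{\psi_n}(0,|\hat x_n|)$. Taking the minimum over $\sigma(\hat\x)$ gives $\lambda_0<\bar\lambda_0(\hat\x)$, and the condition is void when $\hat\x=\mathbf 0$.

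For (b): $\varphi_n(g_n)\le\psi_n'(\alpha_n)$ is equivalent to $(\psi_n')^{-1}(\varphi_n(g_n))\le \alpha_n(\lambda_0)$, and hence, applying $h_n$, to $d_{\psi_n}(0,(\psi_n')^{-1}(\varphi_n(g_n)))\le\lambda_0$. Taking the maximum over $\sigma^c(\hat\x)$ gives $\lambda_0\ge\underline\lambda_0(\hat\x)$.

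Combining the two, every $\lambda_0$ in $[\underline\lambda_0(\hat\x),\bar\lambda_0(\hat\x))$ satisfies \eqref{eq:condition1}--\eqref{eq:condition2}, so $\hat\x$ is still a local minimizer. Nonemptiness of the interval follows because the original $\lambda_0$ lies in it.

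The main obstacle is making the inversion in (b) rigorous. There are three points to handle.

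First, $(\psi_n')^{-1}(\varphi_n(g_n))$ must be well defined and nonnegative, so that it lies where $h_n$ is increasing. When $\Cc=\R$ we have $\psi_n'(0)=0$ by oddness and $\varphi_n(g_n)=|g_n|\ge 0$. When $\Cc=\R_{\ge0}$, $\varphi_n(g_n)\ge\psi_n'(0)$ by definition. In both cases the argument is at least $\psi_n'(0)$, so the preimage is $\ge 0$. If $\varphi_n(g_n)=\psi_n'(0)$, the preimage is $0$ and the bound is $d_{\psi_n}(0,0)=0$, consistent with the fact that the condition then holds for every $\lambda_0$.

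Second, $\varphi_n(g_n)$ must lie in the range of $\psi_n'$. This is guaranteed at the original $\lambda_0$, since $\varphi_n(g_n)\le\psi_n'(\alpha_n)$ there and the interval $[\psi_n'(0),\psi_n'(\alpha_n)]$ lies in the range of $\psi_n'$ by continuity.

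Third, I would note that Assumption~\ref{ass:unif_param_psi} keeps $\Psi$ fixed as $\lambda_0$ varies. This is needed so that \eqref{eq:condition1}--\eqref{eq:condition2} indeed depend on $\lambda_0$ only through $\alpha_n$.
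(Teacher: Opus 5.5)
Your proposal is correct and follows essentially the same route as the paper's appendix proof. Like the paper, you read off the $\lambda_0$-dependent conditions $|\hat x_n|>\alpha_n(\lambda_0)$ and $\varphi_n(g_n)\le\psi_n'(\alpha_n(\lambda_0))$ from Proposition~\ref{prop:loc-min-jpsi}, check that $(\psi_n')^{-1}(\varphi_n(g_n))\ge 0$ is well defined, invert both conditions through the increasing bijection $d_{\psi_n}(0,\cdot)$ of Lemma~\ref{lem:alpha-increasing}, and take the min/max over $n$. One small remark: your derivative $t\psi_n''(t)$ is the right one (the appendix's $\psi''$ is a slip), but ``$\psi_n''>0$ almost everywhere'' need not hold for a strictly convex $C^2$ function; strict increase follows instead from $d_{\psi_n}(0,t)-d_{\psi_n}(0,s)=d_{\psi_n}(s,t)+s\bigl(\psi_n'(t)-\psi_n'(s)\bigr)>0$ for $0\le s<t$.
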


\begin{figure}[!t]
    \centering
    \includegraphics[width=0.9\textwidth]{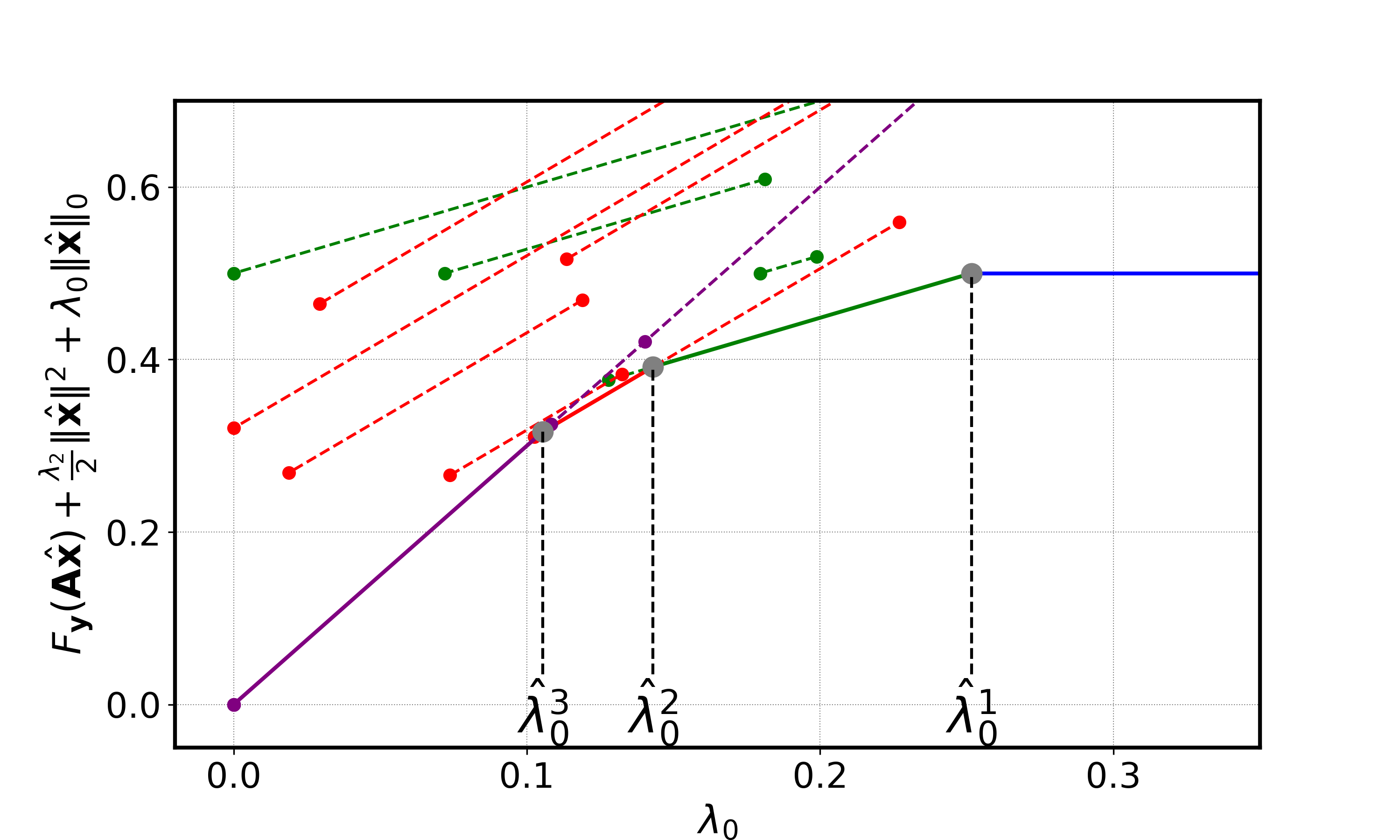}
    \caption{The $\ell_0$-curve computed using \BR{}. Plots of the segments corresponding to strict local minimizers of $J_0$ preserved by the exact relaxation~$J_\Psi$. Each segment is supported on an interval  $[\underline{\lambda}_0(\hat{\x}), \bar{\lambda}_0(\hat{\x})]$ given by Theorem~\ref{th:min-loc-removed}. This graph is the counterpart of Figure~\ref{fig:l0curve-original} when using \BR{}.
    }
    \label{fig:l0curve-relaxation}
\end{figure}

From Theorem~\ref{th:min-loc-removed}, we get that the local minimizers of the relaxation $J_\Psi$ do not correspond to lines in the $(\lambda_0, J_0)$-plane,\footnote{Equivalently, in the $(\lambda_0, J_\Psi)$-plane here, since $J_\Psi$ is an exact relaxation of $J_0$.} but rather to \emph{segments}, as illustrated in Figure~\ref{fig:l0curve-relaxation}. This property lies at the heart of the proposed \Algo{}.

\begin{remark}\label{rem:comment_ass_unif_param_psi}
    The simple expressions for the bounds  in Theorem~\ref{th:min-loc-removed} are due to Assumption~\ref{ass:unif_param_psi}. Without this assumption, the scalar functions $\psi_n$, for $n \in [N]$ could depend on $\lambda_0$, so that $\psi_n = \psi_n^{\lambda_0}$, through their parametrization when enforcing condition~\eqref{eq:cc-condition}. In such a case, before minimizing over $\sigma(\hat{\mathbf{x}})$ in~\eqref{eq:lamBar}, one would first need to isolate $\lambda_0$ in the inequality $d_{\psi_n^{\lambda_0}}(0, |\hat{x}_n|) > \lambda_0$ (and similarly for the lower bound).
\end{remark}

\subsection{Description of \Algo{}}\label{sec:proposed_algo}

Let $\mathcal{A}$ denote an algorithm guaranteed to converge to a \emph{local minimizer} of $J_\Psi$. In what follows, we denote by $\mathcal{A}(\x_0, \lambda_0)$ the execution of $\mathcal{A}$ initialized at $\x_0$ to minimize $J_\Psi$ for a fixed regularization parameter $\lambda_0$, omitting for simplicity the explicit dependence on the parameters defining the relaxation.

The core idea behind the proposed \Algo{}  is to leverage the regularization parameter range established in Theorem~\ref{th:min-loc-removed} in order to implement warm-start strategies for $\mathcal{A}$ within a tree-search framework, thereby approximating eventually the $\ell_0$-path. Different instances of \Algo{} can be deployed using various inner algorithms $\mathcal{A}$ (see Section~\ref{sec:expe}), and we will denote these  instances as \AlgoCR{$\mathcal{A}$}, with $\mathcal{A}$ specifying the name of the inner algorithm.

\subsubsection{Convergence of $\mathcal{A}$ to a Local Minimizer}

\begin{algorithm}[t]
\begin{algorithmic}[1]
\State \textbf{Require:} $\x_0 \in \R^N$, $\lambda_0 \geq 0$, Algorithm $\mathcal{A}$
\State Define $\sigma^- : \x \mapsto \{n \in \sigma(\x) : |x_n| \leq \alpha_n\}$
\State $\x = \x_0$
\While{$\sigma^-({\x}) \neq \emptyset$}
\State Pick $n \in \sigma^-(\x)$
\State $\x = \mathcal{A}(\x^{(n)}, \lambda_0)$
\EndWhile
\State \textbf{Return:} Local minimizer $\x$ of $J_\Psi$.
\end{algorithmic}
\caption{Macro algorithm to ensure the convergence to a local minimizer}
\label{algo:macro_algo}
\end{algorithm}

Algorithm $\mathcal{A}$ minimizing continuous non-convex functions such as $J_\Psi$ usually come with convergence guarantees only toward critical points, not local minimizers. However, by exploiting the properties of \BR{}, we can embed any such algorithm within an outer loop to ensure convergence to local minimizers. This approach was first introduced for the CEL0 relaxation (i.e., least-squares data fidelity and quadratic $\psi_n$) in~\citep[Section~5.1]{Soubies2015} as a ``macro algorithm'' and its analog was briefly mentioned in~\citep[Remark 4]{essafri2024exact} for the general class of \BR{}.

Let $\hat{\x}$ be a critical point of $J_\Psi$ that is not a local minimizer. By Propositions~\ref{prop:crit-pts-jpsi} and~\ref{prop:loc-min-jpsi}, there must exist at least one index $n \in \sigma(\hat{\x})$ for which $|\hat{x}_n| \leq \alpha_n$. Assuming without loss of generality that $\hat{x}_n>0$ and setting $\upsilon(t) = J_\Psi(\hat{\x}^{(n)} + t \e_n)$, we have that
$$
\begin{cases}
     \upsilon'(\hat{x}_n) = 0 & \text{(as $\hat{\x}$ is a critical point)} \\
     \upsilon \text{ strictly concave on }  (0,\alpha_n)& \text{(from~\eqref{eq:cc-condition})}
\end{cases}
$$
Given that a strictly concave function lies below its tangents, it follows that 
$$
J_\Psi (\hat{\x}^{(n)}) = \upsilon(0) < \upsilon(\hat{x}_n) = J_\Psi(\hat{\x}).
$$
Hence, $\hat{\x}^{(n)}$ can be used as an initial point for algorithm $\mathcal{A}$ to further decrease the objective function and converge to a different critical point. This process can be repeated iteratively until a critical point that is also a local minimizer is reached (see Algorithm~\ref{algo:macro_algo}), i.e., one satisfying $\forall n \in \sigma(\hat{\x}), \, |\hat{x}_n| > \alpha_n$.

Convergence of this macro algorithm in a finite number of iterations for the CEL0 relaxation was established in~\citep[Theorem~5.1]{Soubies2015}. We argue that this result naturally extends to the broader class of \BR{}, omitting the proof for brevity as it relies on similar arguments. Moreover, it is worth noting that such critical points that are not local minimizers are very unstable and algorithms generally avoid them during optimization.

\subsubsection{Details of the Algorithm}

We can now provide details on the proposed \AlgoCR{$\mathcal{A}$} algorithm, whose pseudo-code is presented in Algorithm~\ref{algo:proposed}, and its behavior is illustrated through a toy example in Figure~\ref{fig:algo-geometric}. The algorithm relies on the fact that, from Theorem~\ref{th:min-loc-removed}, a local minimizer $\hat \x$ of $J_\Psi$ obtained by $\mathcal{A}$ for $\lambda_0$ constitutes a good initial point for $\mathcal{A}$ to minimize $J_\Psi$ when the regularization parameter is chosen as
\begin{equation}\label{eq:update_lamb}
    \lambda_0 = \rho \cdot \underline{\lambda}_0(\hat{\x}) \quad \text{or} \quad \lambda_0 = \rho^{-1} \cdot \bar{\lambda}_0(\hat{\x}),
\end{equation}
for $\rho \in (0,1)$. Indeed, when $\rho$ is chosen close to $1$, these updated values of $\lambda_0$ ensure that $\hat{\x}$ is no longer a local minimizer of $J_\Psi$, while likely maintaining a relatively low objective function value (see Figure~\ref{fig:algo-geometric}, purple crosses, for an illustration).

\begin{algorithm}[t]
\begin{algorithmic}[1]
\State \textbf{Require:} $k^{\mathrm{max}} \in \N$, $N^{\mathrm{pass}} \in \N$, $\rho \in (0,1)$, Algorithm  $\mathcal{A}$
\State \textbf{Initialize:}
\State $\mathcal{X} = \{\mathbf{0} \}$ \Comment{Set of all computed points}
\State $\mathcal{X}^\mathrm{fwd} = \{\mathbf{0} \}$ \Comment{Set of points to be explored in the forward pass.}
\State $\mathcal{X}^\mathrm{bwd} = \emptyset$ \Comment{Set of points to be explored in the backward pass.}
\For{$p = 1,\ldots,N^{\mathrm{pass}}$} \Comment{Loop over passes}
    \State \textit{-- Forward Pass}
    \For{$k = 0,\ldots,k^{\mathrm{max}}$}
        \State $\x_0 = \argmin_{\x \in \mathcal{X}^\mathrm{fwd}, \, \|\x\|_0 = k } F_\y(\A\x) + \frac{\lambda_2}{2}\|\x\|^2$ \Comment{Select the point to explore}       
            \If{$\x_0 \neq \emptyset$}
            \State $\hat{\x}  \leftarrow  \mathcal{A}(\x_0, \rho \, \underline{\lambda}_0(\x_0))$ \Comment{Compute new point} \label{algo:line:newpts1}
            \If{$\hat{\x} \notin \mathcal{X}$ and $\|\hat{\x}\|_0 \leq k^{\mathrm{max}}$}
                \State $\mathcal{X}^\mathrm{fwd} = \mathcal{X}^\mathrm{fwd} \cup \{ \hat \x\}$ \Comment{Update $\mathcal{X}^\mathrm{fwd}$} 
                \State $\mathcal{X}^\mathrm{bwd} = \mathcal{X}^\mathrm{bwd} \cup \{ \hat \x\}$\Comment{Update $\mathcal{X}^\mathrm{bwd}$} 
                \State $\mathcal{X} = \mathcal{X} \cup \{ \hat \x\}$\Comment{Update $\mathcal{X}$}
                \EndIf
                \State $\mathcal{X}^\mathrm{fwd} = \mathcal{X}^\mathrm{fwd} \backslash \{ \x_0\}$ \Comment{Remove $\x_0$ from points to explore in fwd pass}
            \EndIf
    \EndFor
    \State \textit{-- Backward Pass}
    \For{$k = k^{\mathrm{max}},\ldots,0$}
        \State $\x_0 = \argmin_{\x \in \mathcal{X}^\mathrm{bwd}, \, \|\x\|_0 = k } F_\y(\A\x) + \frac{\lambda_2}{2}\|\x\|^2$ \Comment{Select the point to explore}       
            \If{$\x_0 \neq \emptyset$}
            \State $\hat{\x}  \leftarrow  \mathcal{A}(\x_0, \rho^{-1} \, \overline{\lambda}_0(\x_0))$ \Comment{Compute new point} \label{algo:line:newpts2}
             \If{$\hat{\x} \notin \mathcal{X}$ and $\|\hat{\x}\|_0 \leq k^{\mathrm{max}}$}
            \State $\mathcal{X}^\mathrm{bwd} = \mathcal{X}^\mathrm{bwd} \cup \{ \hat \x\}$\Comment{Update $\mathcal{X}^\mathrm{bwd}$} 
            \State $\mathcal{X}^\mathrm{fwd} = \mathcal{X}^\mathrm{fwd} \cup \{ \hat \x\}$\Comment{Update $\mathcal{X}^\mathrm{fwd}$}
            \State $\mathcal{X} = \mathcal{X} \cup \{ \hat \x\}$\Comment{Update $\mathcal{X}$}
            \EndIf
            \State $\mathcal{X}^\mathrm{bwd} = \mathcal{X}^\mathrm{bwd} \backslash \{ \x_0\}$  \Comment{Remove $\x_0$ from points to explore in bwd pass}
            \EndIf
    \EndFor
\EndFor
\State \textbf{Return:}  \texttt{ExtractPath}($\mathcal{X}$)
\end{algorithmic}
\caption{\AlgoCR{$\mathcal{A}$}} 
\label{algo:proposed}
\end{algorithm}

\begin{figure}[t!]
\includegraphics[width=\linewidth]{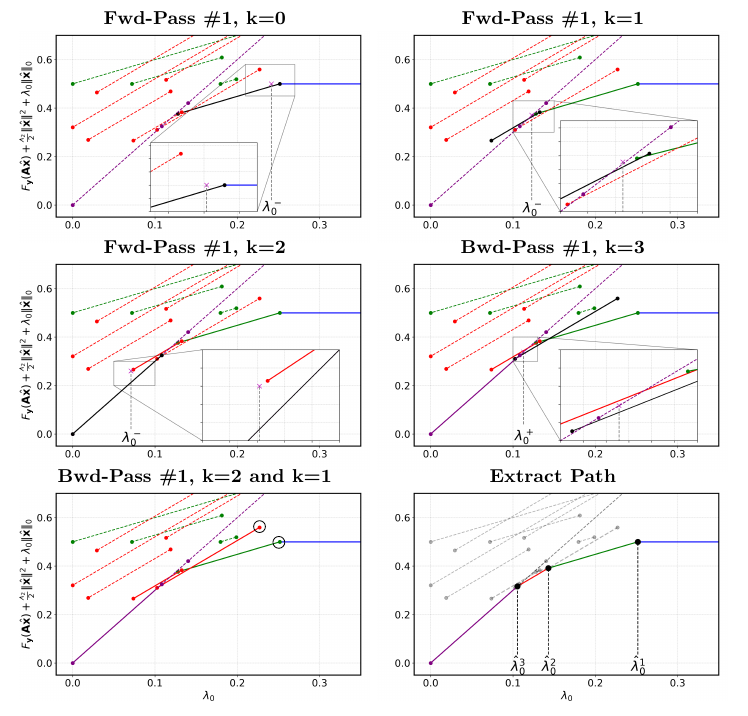}
\vspace{-0.75cm}
 \caption{\AlgoCR{$\mathcal{A}$} applied to the $\ell_0$-regularized least-squares problem illustrated in Figures~\ref{fig:l0curve-original} and~\ref{fig:l0curve-relaxation}. Here, a single forward and backward pass is performed ($N^{\mathrm{pass}} = 1$), with $k^{\mathrm{max}} = 3$ and $\rho = 0.98$. Each subplot corresponds to a step of the algorithm. In the four top plots, solid colored segments represent the current points in $\mathcal{X}$, while the purple cross marks the initially selected point $\x_0$ at $\lambda_0 = \rho  \underline{\lambda}_0(\x_0)$ for the forward pass (resp., $\lambda_0 = \rho^{-1}  \bar{\lambda}_0(\x_0)$ for the backward pass). The solid black segment denotes the newly computed point $\hat{\x}$. The bottom-left plot summarizes the steps for \(k = 2\) and \(k = 1\) during the backward pass, where the two circles highlight the explored points and their corresponding \(\lambda_0\) bounds. In both steps, the computed points $\hat{\x}$ are already present in \(\mathcal{X}\). Finally, the bottom-right plot illustrates the extraction of the final \(\ell_0\)-path from the points in \(\mathcal{X}\).
 }
\label{fig:algo-geometric}
\end{figure}

More precisely, \AlgoCR{$\mathcal{A}$}  constructs a set \(\mathcal{X}\) of points (with cardinality at most $k^{\max} \in [N]$) that are candidate to be in the \(\ell_0\)-path. Initialized as the zero vector, so that \(\mathcal{X} = \{\mathbf{0}\}\), which corresponds to the global minimizer for sufficiently large \(\lambda_0\), \AlgoCR{$\mathcal{A}$}  iteratively performs forward and backward passes that explore points in \(\mathcal{X}\), using~\eqref{eq:update_lamb} and warm-start strategies, to generate new candidate points that are sequentially added to $\mathcal{X}$. During a forward (resp., backward) pass,  points \(\x_0 \in \mathcal{X}\) are explored once, serving as  initial points for \(\mathcal{A}\) to minimize \(J_\Psi\) for values of \(\lambda_0\) slightly lower (resp., larger) than the lower bound \(\underline{\lambda}_0({\x_0})\) (resp., upper bound \(\bar{\lambda}_0({\x_0}))\). The exploration order follows increasing (resp., decreasing) cardinality among points not yet explored in that direction (loops over k). If multiple unexplored points share the same cardinality, only the one with the lowest value of \(F_\y(\A \cdot) + \frac{\lambda_2}{2}\|\cdot\|^2\)  is selected for exploration in the current pass. Finally, once the required number of passes is completed or all points in \(\mathcal{X}\) have been explored in both forward and backward directions, an estimate of the \(\ell_0\)-path is extracted from \(\mathcal{X}\), as detailed in the following paragraph. 

\paragraph{Extraction of the estimated path}
Given the final set \(\mathcal{X}\) of candidate solutions, let $\mathcal{X}_{\min} \subset \mathcal{X}$ denote the subset containing, for each cardinality $k \leq k^{\max}$, the point \(\x \in \mathcal{X}\) with $\|\x\|_0 =k$ and the lowest value of \(F_\y(\A \cdot) + \frac{\lambda_2}{2}\|\cdot\|^2\).
For two solutions \(\x_k\) and \(\x_s\) of $\mathcal{X}_{\min}$ with \(s = \|\x_s\|_0 > \|\x_k\|_0 = k\), let define
\begin{equation}
{\lambda}_0^{k,s} := \frac{1}{s - k} \max\left(F_\y(\A\x_k) + \frac{\lambda_2}{2}\|\x_k\|^2 - F_\y(\A\x_s) - \frac{\lambda_2}{2}\|\x_s\|^2, 0\right).
\end{equation}
One can then observe that, for all \(\lambda_0 \geq {\lambda}_0^{k,s}\), we have \(J_0(\x_k) \leq J_0(\x_s)\), and that the reverse inequality holds for \(\lambda_0 \leq {\lambda}_0^{k,s}\).
We then define the critical values associated with \(\x_k\) as
\begin{equation}\label{eq:lam0-critic}
\underline{\lambda}_0^k = \max_{s \in [k+1, k^{\mathrm{kmax}}]} {\lambda}_0^{k,s}, \quad \text{and} \quad \bar{\lambda}_0^k = \min_{s \in [0,k-1]} {\lambda}_0^{s,k},
\end{equation}
so that, for all \(\lambda_0 \in \Lambda^k := (\underline{\lambda}_0^k, \bar{\lambda}_0^k)\), the solution \(\x_k\) dominates all other candidate solutions in $\mathcal{X}_{\min}$. The estimated \(\ell_0\)-path is then formed by the points \(\x_k \in \mathcal{X}_{\min}\) for which the interval \(\Lambda^k\) is non-empty, and the associated $(\Lambda^k)_k$ form a partition of $\R_{\geq 0}$ (see last plot of Figure~\ref{fig:algo-geometric}).

\subsubsection{Boosting \Algo{}}\label{sec:local-search}

Conceptually, the proposed \Algo{} shares similarities with \texttt{L0Learn}~\citep{CWLSHazimeh:20,HazimehClass}.
The key distinctions lie the facts that  \Algo{} exploits exact relaxations $J_\Psi$ and minimizes them with any suitable algorithm, while performing a forward-backward search over $\lambda_0$. In contrast, \texttt{L0Learn} directly addresses $J_0$ using a coordinate descent (CD) method with a single forward pass over decreasing values of $\lambda_0$. In this section, we describe three ``boosting'' strategies, originally proposed in \texttt{L0Learn} to enhance both computational efficiency and  quality of candidate solutions to the $\ell_0$-path, that we also adopt in \Algo{}.

\paragraph{Local combinatorial search.}

By construction, coordinate descent (CD) converges to coordinate-wise (CW) minima, i.e., points for which the objective function cannot be decreased by varying a single coordinate. A stronger optimality condition, introduced by the authors of \texttt{L0Learn}, is based on the notion of \emph{partial swap inescapable} (PSI) minima~\citep[Definition 3]{CWLSHazimeh:20}: a point is a PSI minimum if the objective function cannot be improved by swapping any support component with any off-support component, followed by a ``projection'' onto the new support. As such, given a CW minimizer $\hat{\x}$ obtained with CD, they propose  to refine it by constructing a candidate point $\tilde{\x} = \hat{\x}^{(i)} + \tilde{z}\mathbf{e}_j$, obtained by swapping $i \in \sigma(\hat{\x})$, $j \in \sigma^c(\hat{\x})$ with 
\begin{equation}\label{eq:local-search}
\tilde{z} \in  \argmin_{z \in \Cc} \; 
F_\y(\A\hat\x^{(i)} + z \mathbf{a}_j) + \frac{\lambda_2}{2}z^2,
\end{equation}
satisfying $F_\y(\A\Tilde{\x}) + \frac{\lambda_2}{2}\|\Tilde{\x}\|^2 < F_\y(\A{\x}) + \frac{\lambda_2}{2}\|{\x}\|^2$ (if such one exists, i.e., $\hat{\x}$ is not PSI). For quadratic fidelities $F_\y(\A\cdot)$,  an efficient algorithm of linear complexity (with careful implementation) exists to find  such a point when it exists. It  relies on a closed-form solution for~\eqref{eq:local-search}~\citep{CWLSHazimeh:20}.  For non-quadratic fidelities with Lipschitz gradients, $F_\y(\A\cdot)$ can be upper-bounded by a quadratic function, enabling the use of the same closed-form solution within a majorization-minimization framework~\citep{HazimehClass}. Finally, in \texttt{L0Learn} with \texttt{PSI} local search, CD is run a second time from $\tilde{\x}$.

We also implement this \texttt{PSI} strategy in \Algo{} and describe below the two key components required to efficiently compute $\tilde{\x}$ for quadratic fidelities. Since this strategy relies on the CW stationarity of the input $\hat{\x}$, a condition not always guaranteed when using $J_\Psi$ and off-the-shelf minimization algorithms to minimize it, some adaptations to the method proposed in~\citep{CWLSHazimeh:20} are necessary. Let $\mathbf{r} := \A\hat{\x} - \y$, then after some computations one gets that 
$$
\tilde{z} = \frac{\a_j^T(\a_i \hat{x}_i - \mathbf{r})}{\|\a_j\|^2 + \lambda_2},
$$
and that
\begin{align*}
 & F_\y(\A\Tilde{\x}) + \frac{\lambda_2}{2}\|\Tilde{\x}\|^2 < F_\y(\A{\x}) + \frac{\lambda_2}{2}\|{\x}\|^2 \\  
 \Longleftrightarrow & (\|\a_j\|^2 + \lambda_2) z^2 > (\|\a_j\|^2 - \lambda_2) \hat{x}_i^2 - 2(\a_i^T \mathbf{r})\hat{x}_i .
\end{align*}
The term $2(\a_i^T \mathbf{r})\hat{x}_i$ is not present in~\citep{CWLSHazimeh:20} as, for $\hat{\x}$ CW minimizer of $J_0$, $\a_i^T \mathbf{r} = 0$ for all $i \in \sigma(\hat{\x})$.
Since most solvers already compute quantities such as $\a_j^T \a_i$ and $\a_j^T \mathbf{r}$, the candidate point $\tilde{\x}$ defined above can be efficiently determined using the two previous expressions.

\paragraph{Correlation screening.}

Unsurprisingly, and as confirmed by our experimental results, increasing the number of passes in \Algo{} improves the quality of the estimated $\ell_0$-path. However, this also increases the number of points explored and the number of calls to the inner solver $\mathcal{A}$, thus affecting computational efficiency. Therefore, improving the efficiency of this inner algorithm is critical. A natural approach to accelerate the inner solver is to reduce the size of the problem. Following~\citep{CWLSHazimeh:20}, we restrict the execution of $\mathcal{A}$ only on the support of $\x_0$, augmented by (at most) $N^\mathrm{screen}$ coordinates that are the most likely to contribute to the solution. All coordinates outside the following set $I$ are set to zero
\[
I = \sigma(\x_0) \cup \left| \nabla G(\x_0) \right|^{\downarrow}_{1:N^\mathrm{screen}},
\]
where $\left| \cdot \right|^{\downarrow}$ denotes the sorting operator in decreasing order of magnitude, and $N^\mathrm{screen} < N$ is a parameter controlling the screening size.

\paragraph{Optimization over stabilized support} During the iterations of many algorithms $\mathcal{A}$, the support often stabilizes before the on-support coordinates have fully converged. Thus, when the support $\sigma^{\mathrm{stab}} := \sigma(\x^l)$ remains unchanged for multiple consecutive iterations $l$ (typically of the order of 10 in practice), we terminate the algorithm and finalize convergence on the support by solving the restricted convex and smooth problem
\[
\hat{\x}_{\sigma^{\mathrm{stab}}} \in \argmin_{\mathbf{z} \in \Cc^{\sharp \sigma^{\mathrm{stab}}}} F_\y(\A_{\sigma^{\mathrm{stab}}} \mathbf{z}) + \frac{\lambda_2}{2}\|\mathbf{z}\|^2.
\]
This ensures that the final solution is obtained by refining only the active coordinates on the stabilized support. We solve this problem using an accelerated gradient descent with backtracking line-search, but of course faster method, such as, e.g., Newton-type approaches could be used.

\section{Experiments}\label{sec:expe}

In this section, we evaluate the performance of our proposed \Algo{} method, focusing on both optimization quality and statistical performance. We compare it with state-of-the-art sparse optimization methods on synthetic and real-world datasets, considering both sparse least-squares and logistic regression problems. {To analyse the influence of the choice of generating functions $\Psi$, we further consider synthetic sparse Kullback-Leibler regression problems. }

\subsection{Problems, Data,  Baselines, and Metrics}\label{sec:problems_metrics}

\subsubsection{Regression Problems}

We focus on the following three instances of Problem~\eqref{eq:problem_setting}.

\paragraph{Sparse least-squares regression (LS).} It corresponds to Problem~\eqref{eq:problem_setting}  with $\A \in \R^{M \times N}$, $\y \in \R^M$, $\Cc = \R$, $\lambda_2 = 0$, and the loss function given by:
    \begin{equation}\label{eq:ls-data}
        F_{\y}^{\mathrm{LS}}(\z)
        = \frac{1}{2M}\| \z - \y\|^2.
    \end{equation}  
This setting arises, e.g., in linear regression and signal/image reconstruction problems with Gaussian noise.
\paragraph{Sparse logistic regression (LR).} It corresponds to Problem~\eqref{eq:problem_setting}  with $\A \in \R^{M \times N}$, $\y \in \{-1,1\}^M$, $\Cc = \R$, $\lambda_2 > 0$,  and the loss function given by:
    \begin{equation}
        F_\y^{\mathrm{LR}}(\z)
        = \frac{1}{M}\sum_{m=1}^M \log\!\left( 1 + \exp\!\left(-y_m z_m \right) \right).
    \end{equation}
    This setting arises in binary classification with labels  $\y \in \{-1,1\}^M$. 

\paragraph{Sparse Kullback-Leibler regression (KL).}
{It corresponds to  Problem~\eqref{eq:problem_setting}  with $\A \in \R_{\geq 0}^{M \times N}$, $\y \in \R_{\geq 0}^M$, $\Cc = \R_{\geq 0}$, $\lambda_2 = 0$ and the loss function given by:
    \begin{equation}
        F_\y^{\mathrm{KL}}(\z)
        = \frac{1}{M}\sum_{m=1}^M d_{\mathrm{KL}}(y_m, z_m + b_m)
    \end{equation}
    with $\mathbf{b} \in \R^M_{>0}$ and $d_{\mathrm{KL}}$ the one-dimensional KL divergence defined in Example~\ref{ex:PsiKL}. Such data term arises in the special case of sparse Poisson regression in which data are counts, i.e., $\y \in \mathbb{Z}_{\geq 0}^M$, drawn from a Poisson distribution, as well as in signal/image reconstruction problems under photon-counting noise.
}

In all three cases, we normalize the columns of $\A$. 
We also note that we consider $\lambda_2 = 0$ in both LS and KL settings, since our goal is to focus on the pure $\ell_0$-regularized problem. However, for LR, a positive value of $\lambda_2$ is required to ensure the well-posedness of the optimization problem \citep[Theorem 1]{essafri2024exact}. In particular, Problem~\eqref{eq:problem_setting} admits a solution whenever either $F_{\y}$ is coercive or $\lambda_2 > 0$. Since coercivity fails in the logistic regression setting, we therefore consider $\lambda_2 > 0$.

\subsubsection{Synthetic Data Generation}\label{sec:data_generation}

In our experiments with simulated data, we generate a measurements matrix $\A \in \mathbb{R}^{M \times N}$ sampled from a multivariate normal distribution with zero mean and covariance matrix $\bm{\Sigma} = \left(\Sigma_{ij}\right)_{1 \leq i,j \leq N} \in \mathbb{R}^{N \times N}$ satisfying one of the following correlation structures:
\begin{itemize}
    \itemsep0pt
    \item \emph{Constant correlation:} $\Sigma_{ij} = \eta$ for all $i \neq j$, and $\Sigma_{ii} = 1$ for all $i \in [N]$.
    \item \emph{Exponential correlation:} $\Sigma_{ij} = \eta^{|i-j|}$ for all $i, j$, with $\Sigma_{ii} = 1$.
\end{itemize}
In both cases, the parameter $\eta \in (0,1)$ controls the level of correlation between features. {In addition, for KL problems,  we ensure the nonnegativity of $\A$ by retaining only the absolute values of its entries.} We now describe the generation of the observation vector $\y$. First, we construct a sparse ground truth vector $\x^* \in \mathbb{R}^N$ with $k$ evenly-spaced nonzero entries of unit amplitude in absolute value.

\paragraph{Least-squares regression.}  
The observation vector is generated as
$
\y = \A\x^* + \bm{\upvarepsilon},
$
where $\bm{\upvarepsilon} \sim \mathcal{N}(0, \sigma^2 \mathbf{I})$ is a Gaussian vector of noise. The noise variance $\sigma^2$ is chosen to match a desired signal-to-noise ratio (SNR), defined as
$
\mathrm{SNR} = {\operatorname{Var}(\A\x^*)} / {\operatorname{Var}(\bm{\upvarepsilon})}.
$

\paragraph{Logistic regression.}  
The binary response vector $\y \in \{-1,1\}^M$ is generated according to a Bernoulli distribution. For each $m \in  [M]$, the probability of the label being $1$ is 
\[
P(y_m = 1 \mid \underline{\a}_m) = \left(1 + \exp\left(-s \langle \underline{\a}_m, \x^* \rangle\right)\right)^{-1},
\]
where $\underline{\a}_m \in \R^N$ denotes the $m$th row of $\A$ and
 $s > 0$ controls the signal-to-noise ratio. Smaller values of $s$ produce probabilities closer to $1/2$, leading to noisier labels, while larger values increase class separability. In the limit $s \to \infty$, the data become linearly separable. This model is commonly used to simulate binary classification problems with sparse features.

\paragraph{Kullback-Leibler regression.}  
{
We consider the special case of Poisson regression, where the counts response vector $\y \in \mathbb{Z}^M_{\geq 0}$ is generated according to $\y \sim \mathcal{P}(\varrho(\A \x^* + \mathbf{b}))/\varrho$. Here, $\mathcal{P}$ denotes the Poisson distribution, $\mathbf{b} \in \mathbb{R}_{> 0}^M$ is a known offset vector (e.g., modelling background signal in imaging~\citep{fessler1998paraboloidal}), and $\varrho >0$ is a gain factor that controls the level of Poisson noise.
}

\subsubsection{Real datasets}\label{sec:real_data}

We evaluate our proposed \Algo{} method on several real-world datasets, summarized in Table~\ref{tab:real-data}.
For the sparse least-squares problem, we consider two gene expression datasets: the \texttt{RIBOFLAVIN} dataset~\citep{ribovlafin} and the \texttt{NCI-60} dataset~\citep{ross2000systematic}.
For sparse logistic regression, we use two cancer-related classification datasets: the \texttt{COLON CANCER} dataset~\citep{Alon1999BroadPO}, which includes tumor and normal colon tissue samples, and the \texttt{LEUKEMIA} dataset~\citep{golub1999molecular}, which focuses on classifying leukemia subtypes based on gene expression data.
Additionally, we also evaluate the least-squares formulation on the two classification datasets, following common practice in the literature.

\begin{table}[t]
\centering
\begin{tabular}{|l|c|c|c|}
\hline
\cellcolor[gray]{0.9} {\textbf{Dataset}}      &\cellcolor[gray]{0.9}  \textbf{$F_\y$} & \cellcolor[gray]{0.9} \textbf{M} & \cellcolor[gray]{0.9} \textbf{N}    \\ \hline
\texttt{RIBOFLAVIN}~\citep{ribovlafin}            & Least-squares          & 71          & 4088          \\ 
\texttt{NCI-60}~\citep{ross2000systematic}                & Least-squares             & 64          & 6830           \\ 
\texttt{COLON-CANCER}~\citep{Alon1999BroadPO}         & Least-squares/Logistic                & 62          & 2000          \\ 
\texttt{LEUKEMIA}~\citep{golub1999molecular}             & Least-squares/Logistic               & 38          & 7129          \\ \hline
\end{tabular}
\caption{Real-world datasets considered in this work, along with their dimensions and regression problems on which they are used.}
\label{tab:real-data}
\end{table}

\subsubsection{\Algo{} Parameter Settings}\label{sec:base_algos} 

To deploy \Algo{}, we first define an exact relaxation $J_\Psi$ of $J_0$. For both LS and LR problems, we  set $\psi_n(x) = \frac{\gamma_n}{2}x^2$. For LS, this choice nicely aligns with the geometry of the data fidelity term. While this is less the case for LR, it is a practical choice among generating functions that allow \(\beta_\psi\) to be computed in closed form~\citep{essafri2024exact}. 

According to Example~\ref{ex:PsiL2}, and given that we normalize the columns of $\A$, we choose $\gamma_n = \frac{1}{M}$ for LS and $\gamma_n = \frac{1}{4M} + \lambda_2$ for LR  so as to ensure the exact relaxation property. Note that in both cases Assumption~\ref{ass:unif_param_psi} is satisfied as shown in Example~\ref{ex:PsiL2}. Finally, in this context, the bounds in Theorem~\ref{th:min-loc-removed} are given by
\begin{equation}\label{eq:lamb_bounds_L2}
\bar{\lambda}_0(\hat \x) = \min_{n \in \sigma(\hat{\x})} \, \frac{\gamma_n}{2}\hat{x}_n^2 \; \text{ and } \; \underline{\lambda}_0(\hat \x) = \max_{n \in \sigma^c(\hat{\x})} \, \frac{\left(\varphi_n\left(- \left<\a_n ,\nabla F_\y\left(\A\hat{\x} \right) \right>\right)\right)^2}{2 \gamma_n}    
\end{equation}
with $\varphi_n = |\cdot|$.

{Regarding KL problems,  we consider two choices of generating functions: $\psi_n(x) = \frac{\gamma_n}{2}x^2$ (referred to as $\Psi_{\ell_2}$) and $\psi_n(x) = \gamma_n d_{\mathrm{KL}}(\xi ; c_n x + \xi)$ (referred to as $\Psi_{\mathrm{KL}}$). To ensure the exact relaxation property, we set $\gamma_n = \sum_{m \in [M]} {a_{mn}^2 y_m}/{(b_m^2 M)}$ for $\Psi_{\ell_2}$, and $(\gamma_n, c_n, \xi)$ as described in Example~\ref{ex:PsiKL} for $\Psi_{\mathrm{KL}}$. In both cases, Assumption~\ref{ass:unif_param_psi} is satisfied. While for $\Psi_{\ell_2}$ the bounds in Theorem~\ref{th:min-loc-removed} are given by~\eqref{eq:lamb_bounds_L2} (with $\varphi_n = \max(\cdot,0)$), for $\Psi_{\mathrm{KL}}$ they can be simplified as
\begin{equation}
\begin{split}
    & \bar{\lambda}_0(\hat \x) = \min_{n \in \sigma(\hat{\x})}  \gamma_n \xi \left( \log\left( 1 + \frac{c_n}{\xi}|\hat{x}_n| \right) - \frac{c_n |\hat{x}_n|}{c_n |\hat{x}_n| + \xi} \right) \\
    & \underline{\lambda}_0(\hat \x) = \max_{n \in \sigma^c(\hat{\x})} \,  - \gamma_n \xi \left( \log\left( 1 - \frac{\varphi_n(g_n)}{\gamma_n c_n}\right)  + \frac{\varphi_n(g_n)}{\gamma_n c_n} \right)
\end{split} 
\end{equation}
with  $\varphi_n = \max(\cdot,0)$ and $g_n = - \left<\a_n ,\nabla F_\y\left(\A\hat{\x} \right) \right>$.
}

Next, we select an inner algorithm ($\mathcal{A}$) designed to minimize the exact relaxation $J_\Psi$.
In our experiments, we consider the following three algorithms:
\begin{itemize}
\itemsep0pt
    \item \texttt{FBS} (Forward-Backward Splitting~\citep{combettes2011proximal,attouch2010proximal}), also known as proximal gradient.
    \texttt{FBS} combines a gradient step on the smooth term with the proximal operator of $B_\Psi$.
    Its application to minimize $J_\Psi$ is detailed in~\citep{essafri2024exact}, including the derivation of the proximal operator of $B_\Psi$.

    \item \texttt{IRL1} (Iteratively Reweighted $\ell_1$~\citep{Ochs2015OnIR,chouzenoux2016block}).
    \texttt{IRL1} is a majorization-minimization algorithm that optimizes $J_\Psi$ by solving a sequence of convex weighted-$\ell_1$ subproblems.
    Each subproblem is obtained by majorizing the folded-concave unidimensional functions $\beta_{\psi_n}$ using their tangents.
    This requires the subdifferential of $\beta_{\psi_n}$, which is provided in~\citep{essafri2024exact}. The subproblems are minimized using an accelerated \texttt{FBS}~\citep{beck2009fast,nesterov2013gradient}.

    \item \texttt{skglm}~\citep{skglm}.
    \texttt{skglm} is a working-set algorithm based on Anderson-accelerated coordinate descent.
    It has been shown to perform effectively on non-convex continuous functionals such as $J_\Psi$.
\end{itemize}
We denote the resulting algorithmic variants by \AlgoCR{\texttt{FBS}}, \AlgoCR{\texttt{IRL1}}, and \AlgoCR{\texttt{skglm}}. 

Regarding the parameters of the outer \Algo{} strategy, we set $\rho = 0.95$ in all experiments, except for the LS experiment with the \texttt{LEUKEMIA} dataset, where we use $\rho = 0.88$.
This adjustment turned to be necessary for this specific example to prevent the algorithm from terminating prematurely before reaching the given time limit.
For the heuristics presented in Section~\ref{sec:local-search}, we adopt the following settings:
\begin{itemize}
\itemsep0pt
    \item \textit{For synthetic datasets:}
    We disable the screening heuristic (i.e., $N^\mathrm{screen} = N$) and explicitly indicate when the local search \texttt{PSI} is employed.
    \item \textit{For real datasets:}
    We always activate \texttt{PSI} and set $N^\mathrm{screen} = 1000$, except for \AlgoCR{\texttt{skglm}}.
    Since \texttt{skglm} is a working-set method, it inherently avoids optimizing over the entire vector in  early iterations. Moreover, we observed that it achieves better performance without the screening heuristic.
\end{itemize}
Finally, we terminate \Algo{} either when a maximal number of passes $N^\mathrm{pass}$ is reached or when the predefined time limit is attained. 

All experiments were run on a laptop with an Intel Core i7-13800H CPU and 32 GB of RAM under Ubuntu 22.04.5 LTS (64-bit).

\subsubsection{Benchmark Algorithms}

For both LS and LR experiments, we benchmark \AlgoCR{\texttt{FBS}}, \AlgoCR{\texttt{IRL1}}, and \AlgoCR{\texttt{skglm}} against \Algolearn{}~\citep{CWLSHazimeh:20,HazimehClass}.
For LS real datasets, we also compare our methods with  \texttt{L0PD} ($\ell_0$-path descent) \citep{SoussenPath:15}, a greedy algorithm that leverages the concave structure of the $\ell_0$-curve to estimate the $\ell_0$-path. {Since neither \Algolearn{} nor \texttt{L0PD} supports KL problems, we leverage KL experiments to analyse the effect of choosing $\Psi_{\ell_2}$ or $\Psi_{\mathrm{KL}}$ to generate the exact relaxation, employing {\texttt{IRL1}} as inner solver. }

\subsubsection{Metrics}\label{sec:metrics}

To evaluate the performance of the considered methods, we use the following statistical metrics for simulated datasets, computed between an estimated vector $\hat{\x}$ and the ground-truth vector $\x^*$:
\begin{itemize}
    \itemsep0pt
    \item \emph{F1-score}: A metric measuring the accuracy of support recovery, defined as:
\[
\mathrm{F1} = \frac{2 \cdot \mathrm{TP}}{2 \cdot \mathrm{TP} + \mathrm{FN} + \mathrm{FP}},
\]
where $\mathrm{TP}$, $\mathrm{FN}$, and $\mathrm{FP}$ denote the number of true positives, false negatives, and false positives, respectively. An F1-score closer to one indicates better recovery of the support of $\x^*$ by $\hat{\x}$.
    \item \emph{Relative Root Mean Squared Error (RMSE)}: A metric measuring the estimation accuracy of $\hat{\x}$, defined as:
\[
\mathrm{RMSE} = \frac{\|\x^* - \hat{\x}\|}{\|\x^*\|}.
\]
Lower values of RMSE correspond to more accurate signal estimation.
\end{itemize}

For real datasets, where no ground-truth vector $\x^*$ is available, we evaluate the performance of the methods by examining the \emph{Pareto front} associated with the estimated $\ell_0$-path. Specifically, we plot the objective function value $F_\y(\A\hat{\x}) + \frac{\lambda_2}{2}\|\hat{\x}\|^2$ against the sparsity level $\|\hat{\x}\|_0$. A lower Pareto front indicates a better trade-off between fit error and sparsity, i.e., a lower fit error for a given sparsity level.

\begin{remark}
Since we address penalized $\ell_0$ problems, the reported ``Pareto fronts'' actually represent estimates of the {convex envelope} of the true Pareto front, as defined in the context of bi-objective optimization.
Indeed, for $\ell_0$-based bi-objective problems, the Pareto front may be  non-convex, and the penalized formulation cannot access points located in non-convex regions of the front~\citep[Figure~2]{SoussenPath:15}.
\end{remark}

\subsection{Sparse Least-Squares  Regression}\label{sec:LS}

\begin{figure}[!t]
\centering
\includegraphics[width=\linewidth]{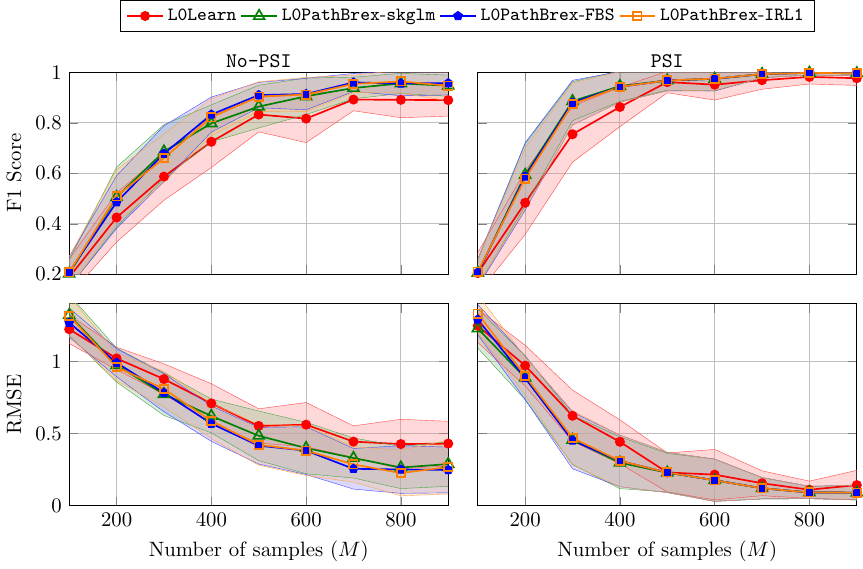}
\caption{
LS. Average F1-score (top) and RMSE (bottom) as functions of the number of samples, computed over 20 experiments. Shaded regions indicate $\pm$ one standard deviation around the average. For the \Algo{} variants, $N^{\mathrm{pass}} =4$ passes are performed. The left column corresponds to methods without local search (\texttt{No-PSI}), while the right column shows results when all methods are combined with the local search strategy \texttt{PSI}. The data are generated with parameters $(N, \mathrm{SNR}, \eta, k) = (1000, 5, 0.9, 25)$ under an exponential correlation structure of the covariance matrix used to generate $\A$. For all methods, the path is computed up to $k^{\max} = 2\cdot k = 50$.\label{fig:LS_Simu_Exp_Corr}
}
\end{figure}

In this section, we present  results for sparse least-squares regression on both synthetic and real datasets.

\paragraph{Statistical performance as a function of number of samples.}
We evaluate the statistical performance of \Algo{} and \texttt{L0Learn} using the F1-score and RMSE metrics, while varying the number of samples \(M\). The remaining parameters \((N, \mathrm{SNR}, \eta, k)\) are fixed at \((1000, 5, \eta, 25)\), where \(\eta\) is set differently depending on the correlation setting.
We report means with standard deviations computed over 20 independent problem instances.
For each estimated \(\ell_0\)-path, we select the point \(\hat{\x}\) that maximizes the F1-score and compute both metrics at this optimal point.

In Figures~\ref{fig:LS_Simu_Exp_Corr} and~\ref{fig:LS_Simu_Cst_Corr}, we report results for exponential (with \(\eta = 0.9\)) and constant (with \(\eta = 0.7\)) correlation settings, respectively. Each figure also compares the performance of the methods with and without the local search \texttt{PSI}.

In both correlation settings, the proposed \Algo{} consistently outperforms \texttt{L0Learn}, regardless of the inner solver $\mathcal{A}$.
Additionally, activating the local search \texttt{PSI} further enhances the performance of all methods.
Finally, as expected, performance is slightly degraded in the more challenging constant correlation setting.

These  gains come at the cost of an increased computational cost.
For the four passes considered in our experiments, the computational times of the \Algo{} variants (without \texttt{PSI}) range from a few seconds for the \texttt{skglm} variant to a few tens of seconds for the \texttt{IRL1} variant with the \texttt{FBS} variant in between, closer to \texttt{skglm}.
In contrast, the computational time of \texttt{L0Learn}  is of the order of 0.1 seconds. It is important to note that the computational cost of \Algo{} stems not only from the inner solver, but also from the number of points explored during the given number of passes, which differs depending on the inner solver. Furthermore, it is important to note that while all \Algo{} variants are implemented in \texttt{Python}, \texttt{L0Learn} is based on a \texttt{C++} implementation, a distinction that should be considered when interpreting these computational time comparisons.

Overall, in these experiments, \AlgoCR{\texttt{skglm}} and \AlgoCR{\texttt{FBS}} emerge as the methods offering the best compromise between statistical performance and computational efficiency.

\begin{figure}[!t]
\centering
\includegraphics[width=\linewidth]{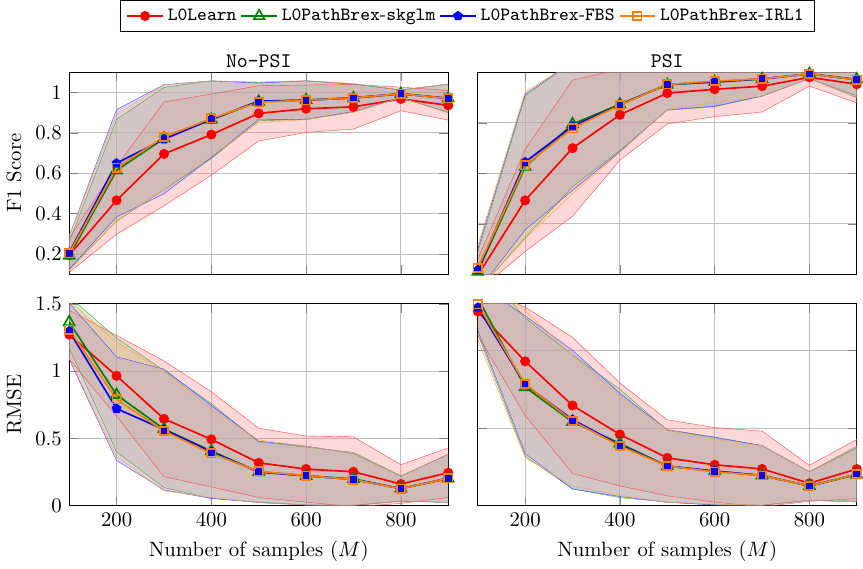}
\caption{
LS. Same as Figure~\ref{fig:LS_Simu_Exp_Corr} for the constant correlation setting with $\eta = 0.7$.\label{fig:LS_Simu_Cst_Corr}
}
\end{figure}

\paragraph{Pareto front estimation on real datasets.} Figure~\ref{fig:LS-real-60seconds} presents the estimated Pareto fronts for the real datasets of Table~\ref{tab:real-data}.
Unlike the simulated experiments, we fix a time limit of one minute for all \Algo{} variants, rather than fixing the number of passes $N^{\mathrm{pass}}$.

\begin{figure}[!t]
    \centering
    \includegraphics[width=\linewidth]{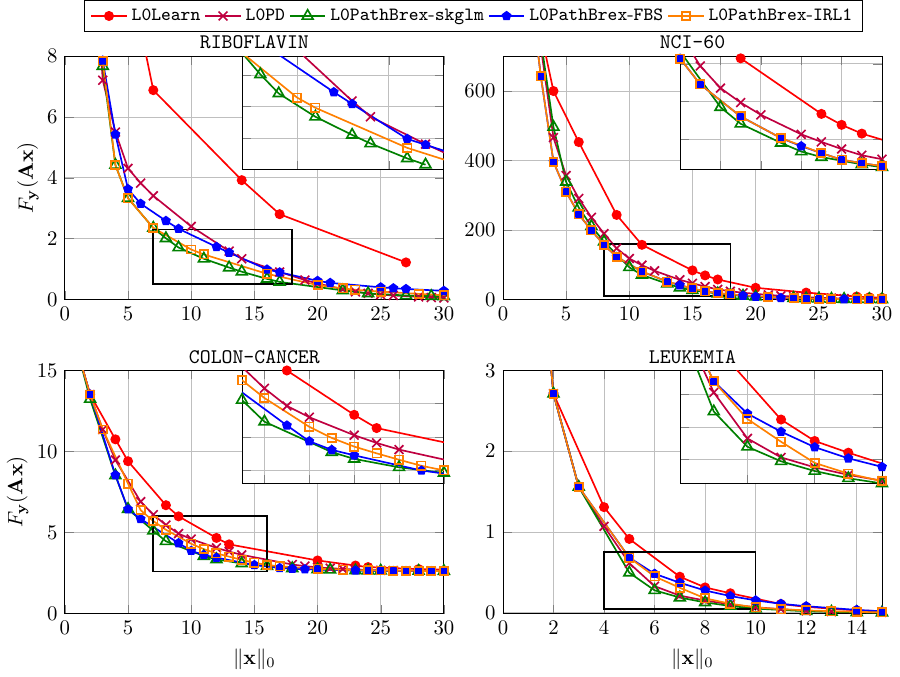}
    \vspace{-0.5cm}
    \caption{Estimated Pareto fronts for the LS problem on the four real datasets from Table~\ref{tab:real-data} with a one-minute time limit for all \Algo{} variants.}
    \label{fig:LS-real-60seconds}
\end{figure}

The results show that all \Algo{} variants consistently outperform (lower curves) \texttt{L0Learn} by a significant margin. 
They also surpass the second baseline, \texttt{L0PD}, though to a lesser extent. The only exception is the \texttt{LEUKEMIA} dataset, where only \AlgoCR{\texttt{skglm}} outperforms \texttt{L0PD}. In other words, \Algo{} finds sparser points that also exhibit better data fidelity.
Furthermore, we observe  differences among the considered inner solvers $\mathcal{A}$.
In particular, \texttt{skglm} consistently achieves the best results, while the relative performance of \texttt{IRL1} and \texttt{FBS} varies across datasets, with each outperforming the other in different cases.

We supplement these results with two additional figures in the appendix: Figures~\ref{fig:LS-real-15seconds} and~\ref{fig:LS-real-300seconds}, which present the same experiment with time limits of 15 seconds and 5 minutes, respectively. Several observations can be made depending on the dataset.
On the one hand, for the \texttt{NCI-60} and \texttt{LEUKEMIA} datasets, the estimated Pareto curves remain identical between the three reported time limits.
This indicates that, on these datasets, the estimated $\ell_0$-path stabilizes within the first 15 seconds, and any additional points computed afterward are pruned during the final path extraction. On the other hand, for the \texttt{RIBOFLAVIN} and \texttt{COLON-CANCER} datasets, the estimated Pareto curves obtained after only 15 seconds (Figure~\ref{fig:LS-real-15seconds}) are noticeably degraded compared to those obtained after one minute (Figure~\ref{fig:LS-real-60seconds}).
In particular, their performance at best reaches that of \texttt{L0PD}, yet still remains superior to that of \texttt{L0Learn}.
This suggests that, for these two datasets, extending the passes to one minute significantly enhances the quality of the estimated $\ell_0$-path.
However, when examining the results after five minutes (Figure~\ref{fig:LS-real-300seconds}), we observe two distinct behaviours.
For the \texttt{RIBOFLAVIN} dataset, the curves have further improved, whereas for the \texttt{COLON-CANCER} dataset, they remained identical to that observed after one minute.

Overall, the proposed \Algo{} approach provides a good balance between  quality of the estimated $\ell_0$-path and computational efficiency.
While \texttt{L0PD} executes in a order of one second and \texttt{L0Learn} executes in a few seconds on these examples, \Algo{} achieves notable quality improvements at a reasonable computational cost, especially given the inherent complexity of such $\ell_0$-regularized problems.

\begin{remark}
    For the \texttt{COLON-CANCER} dataset, one may observe that reported curves for all methods do not converge to zero as the support size increases.
First, note that the matrix $\A$ has full row rank $M$ in this case, implying that supports of size at least $M$ exist for which perfect data fidelity (i.e., zero error) can be achieved. We identify two potential explanations for the observed behavior:
\begin{itemize}
    \itemsep0pt
    \item All methods struggle to get high-quality solutions (i.e., with low data fidelity) for cardinalities between, say, 15 and $M$. Within this range, many local minima exhibit similar data fidelity values (resulting in the observed ``plateau''), while the rare, high-quality solutions that would drive the Pareto curve toward zero remain difficult to reach.
    \item The measurement vector $\y$ lies at a nearly constant distance from the subspaces generated by any $q \in [15,M]$ columns of $\A$.
\end{itemize}
\end{remark}

\subsection{Sparse Logistic Regression}\label{sec:LR}

In this section, we present  results for sparse logistic regression on both synthetic and real datasets.

\begin{figure}[!t]
\centering
\includegraphics[width=\linewidth]{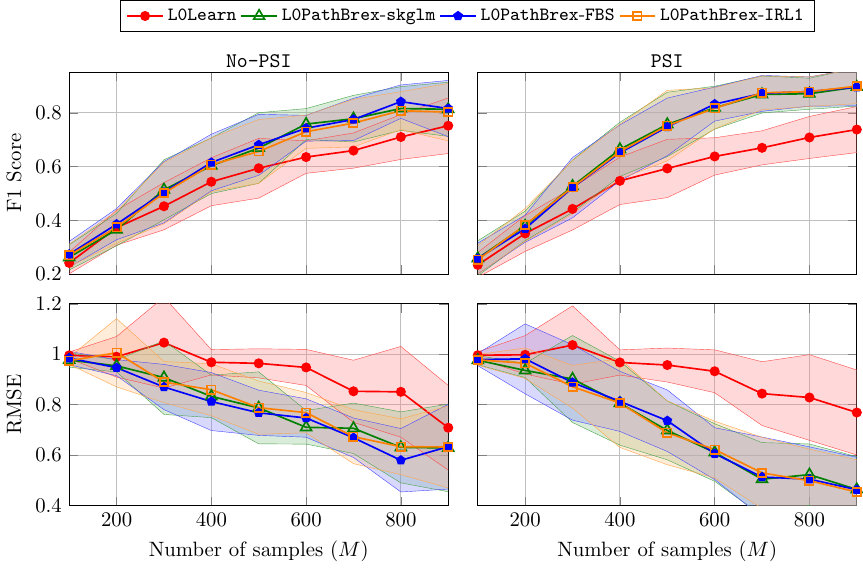}
\caption{
LR. Average F1-score (top) and RMSE (bottom) as functions of the number of samples, computed over 20 experiments. Shaded regions indicate $\pm$ one standard deviation around the average. For the \Algo{} variants, $N^{\mathrm{pass}} =4$ passes are performed. The left column corresponds to methods without local search (\texttt{No-PSI}), while the right column shows results when all methods are combined with the local search strategy \texttt{PSI}. The data are generated with parameters $(N, s, \eta, k) = (1000, 25, 0.9, 25)$ under an exponential correlation structure of the covariance matrix used to generate $\A$. For all methods, the path is computed up to $k^{\max} = 2\cdot k = 50$. Regarding the parameter $\lambda_2$, we consider a logarithmically spaced grid of 10 values in $[10^{-6}, 10]$, and select the solution achieving the highest F1-score over this grid.
\label{fig:LR_Simu_Exp_Corr}
}
\end{figure}

\begin{figure}[!t]
\centering
\includegraphics[width=\linewidth]{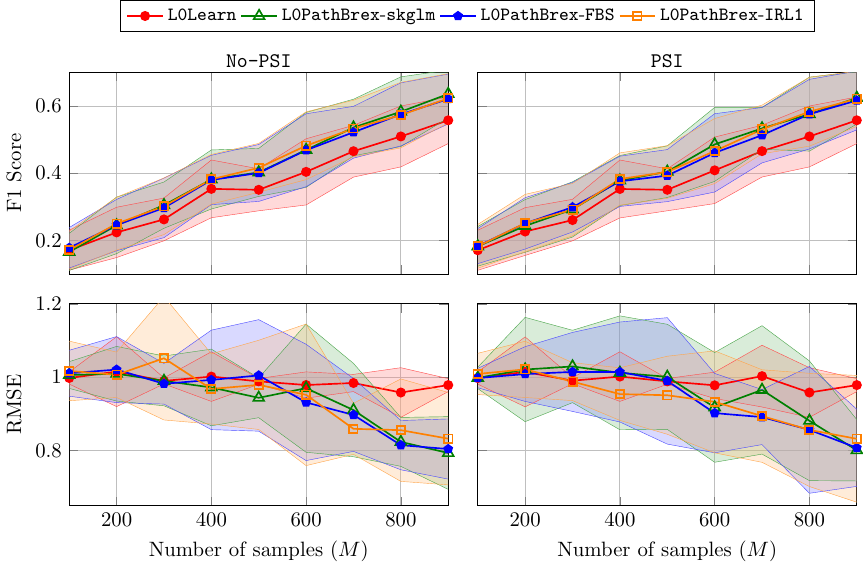}
\caption{
LR. Same as in Figure~\ref{fig:LR_Simu_Exp_Corr} with $k = 20$ and $\rho = 0.5$  for a constant correlation setting.
\label{fig:LR_Simu_Cst_Corr}
}
\end{figure}

\paragraph{Statistical performance as a function of number of samples.}

Model selection is evaluated as for the least-squares case, using the F1-score and RMSE metrics. The parameters $(N, s, \eta, k)$ are fixed to $(1000, 25, \eta, k)$, where $(\eta, k) = (0.9, 25)$ for exponential correlation and $(\eta, k) = (0.5, 20)$ for constant correlation. In addition, $\lambda_2$ is selected from a logarithmically spaced grid of 10 values in $[10^{-6}, 10]$, and the estimate $\hat{\x}$ is chosen as the one maximizing the F1-score over both the $\ell_0$-path and this $\lambda_2$ grid.

Figures~\ref{fig:LR_Simu_Exp_Corr} and~\ref{fig:LR_Simu_Cst_Corr} report the results for the exponential and constant correlation settings, respectively. As in the least-squares case, the proposed method~\Algo{} achieves better recovery performance in terms of both F1-score and RMSE compared to \Algolearn{}, for all choices of the inner solver~$\mathcal{A}$ with 4 passes. In addition, the local search strategy~\texttt{PSI} consistently improves the results across all methods.

We also observe that the choice of inner solver has only a minor impact on the statistical performance. However, in terms of computational cost, \texttt{FBS} is the fastest, followed by \texttt{skglm} and \texttt{IRL1}, each requiring a few seconds. When combined with \texttt{PSI}, \texttt{FBS} remains below 10 seconds, while \texttt{skglm} and \texttt{IRL1} typically exceed 10 seconds. In contrast, \Algolearn{} requires between one and a few seconds. 
The relatively higher computational cost of \Algolearn{} in the logistic regression compared to the least-squares case, can be explained by the absence of a closed-form solution for the one-dimensional subproblems of the form $u \mapsto J_0(\x^{(n)} + u \e_n)$, which are required in each coordinate descent update.

\paragraph{Pareto front estimation on real datasets.} In  Figure~\ref{fig:LR-real-60seconds}, we present the estimated Pareto fronts for the real datasets listed in Table~\ref{tab:real-data}, with a time limit of one minute imposed on the \Algo{} variants. For this experiment, comparisons are made only with \texttt{L0Learn}, as \texttt{L0PD} is specific to LS problems.

\begin{figure}
\includegraphics[width=\linewidth]{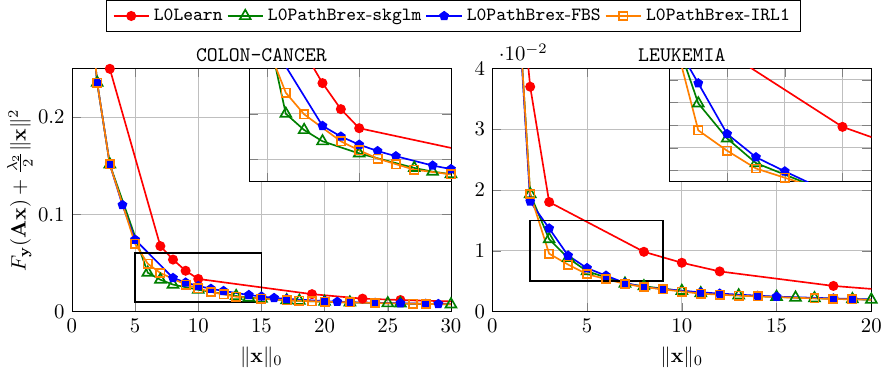}
\vspace{-0.5cm}
\caption{Estimated Pareto fronts for the LR problem on the two real datasets from Table~\ref{tab:real-data} with a one-minute time limit for all \Algo{} variants. The ridge regularization parameter is fixed to $\lambda_2 = 10^{-5}$.}\label{fig:LR-real-60seconds}
\end{figure}

As observed with LS experiments, all \Algo{} variants significantly outperform \texttt{L0Learn} on both datasets. With respect to the choice of inner algorithm $\mathcal{A}$, \texttt{skglm} is no longer consistently the best performer; for instance, \texttt{IRL1} achieves superior results on the \texttt{LEUKEMIA} dataset. Meanwhile, \texttt{FBS} consistently yields the lowest performance among the three.
We again supplement these results with Figures~\ref{fig:LR-real-15seconds} and~\ref{fig:LR-real-300seconds}, corresponding to time limits of 15 seconds and 5 minutes, respectively. For both datasets, we observe that i) after 15 seconds, all \Algo{} variants already outperform \texttt{L0Learn}, and ii) the estimated Pareto fronts continue to improve as additional passes are performed, up to 5 minutes.

\subsection{Sparse Kullback-Leibler regression}\label{sec:expeKL}
As mentioned earlier,  neither existing \Algolearn{} nor \texttt{L0PD} solvers supports KL-dependent problems. We thus leverage these KL experiments to compare the quality of the Pareto fronts estimated by \AlgoCR{\texttt{IRL1}} using either $\Psi_{\ell_2}$ or $\Psi_{\mathrm{KL}}$ to generate \BR{}, as defined in Section~\ref{sec:base_algos}. To this end, we generate 100 sparse Poisson regression problems with parameters $(M, N, \varrho, k) = (50, 100, 30, 10)$ and $\mathbf{b} = b \mathbf{1}$, where $b = \max(\A \mathbf{x}^*) / 100$. The matrix $\A$ is generated using the exponential correlation model. Finally, \AlgoCR{\texttt{IRL1}} is run until no new points remain to be explored in $\mathcal{X}$.
The results of each run are then classified into four categories:
\begin{itemize}
    \itemsep0pt
    \item \textbf{$\Psi_{\mathrm{KL}}$-best:} The Pareto front estimated with $\Psi_{\mathrm{KL}}$ is below the one estimated with~$\Psi_{\ell_2}$.
    \item \textbf{$\Psi_{\ell_2}$-best:} The Pareto front estimated with $\Psi_{\ell_2}$ is below the one estimated with $\Psi_{\mathrm{KL}}$.
    \item \textbf{Equivalent:} Both estimated Pareto fronts coincide.
    \item \textbf{Indecisive:} Estimated Pareto fronts intersect at least once, indicating that $\Psi_{\mathrm{KL}}$ yields better solutions for some sparsity levels, while $\Psi_{\ell_2}$ performs better for others.
\end{itemize}

Occurrences of each case are reported in Table~\ref{tab:KL_res} for three correlation parameters $\eta \in \{0.4, 0.6, 0.8\}$. While the majority of generated problems result in either equivalent ($\approx 50\%$) or indecisive ($\approx 25\%$) estimated Pareto fronts for both \BR{}, we observe an advantage of $\Psi_{\mathrm{KL}}$ in the remaining cases. This aligns with the fact that $\Psi_{\mathrm{KL}}$ is better suited than $\Psi_{\ell_2}$ to the geometry of the KL data fidelity term in order to derive an exact relaxation. In particular, the \BR{} associated with $\Psi_{\mathrm{KL}}$ has been previously showed to eliminate more local minimizers~\citep{essafri2024l0}.

\begin{table}[t]
    \centering
    \begin{tabular}{ccccc}
    \toprule
         & $\Psi_{\mathrm{KL}}$-best &  $\Psi_{\ell_2}$-best &  Equivalent & Indecisive \\
         \midrule  \midrule
        $ \eta = 0.4$ & 19 & 5 & 44 & 32 \\
        $ \eta = 0.6$ & 25 & 4 & 47 & 24 \\
        $ \eta = 0.8$ & 21 & 5 & 49 & 25  \\
        \bottomrule
    \end{tabular}
    \caption{Comparisons of estimated Pareto fronts for a Poisson regression problem (KL fidelity) solved by  \AlgoCR{\texttt{IRL1}} using either $\Psi_{\ell_2}$ or $\Psi_{\mathrm{KL}}$ to generate \BR{}. Reported values correspond to the occurrence of the four considered categories among 100 instances of the problem. }
    \label{tab:KL_res}
\end{table}

\section{Conclusions \& Outlook}

In this work, we proposed \Algo{} to  estimate the solution path of $\ell_0$-regularized optimization problems with general (i.e., potentially non-quadratic) data terms. The method exploits properties of exact relaxations $J_\Psi$ of  the original function $J_0$, obtained by replacing the $\ell_0$ term with $B_\Psi$, non-convex continuous relaxation defined in terms of Bregman divergences, called  \BR{}. Specifically, while each local minimizer $\hat{\x}$ of $J_0$ remains a local minimizer for any value of the regularization parameter $\lambda_0$, it corresponds to a local minimizer of $J_\Psi$ only over an interval $[\underline{\lambda}_0(\hat{\x}), \bar{\lambda}_0(\hat{\x})]$ of $\lambda_0$ values. Given any off-the-shelf algorithms able to deal with $J_\Psi$, \Algo{} leverages this range to implement warm-start strategies in both forward (exploring the lower bound $\underline{\lambda}_0(\hat{\x})$) and backward (exploring the upper bound $\bar{\lambda}_0(\hat{\x})$) directions.

We benchmarked the performance of the proposed \Algo{} against the state-of-the-art \texttt{L0Learn} algorithm method for both sparse least-squares and logistic regression problems. The comparisons were conducted from both statistical and optimization (Pareto front estimation) perspectives, using synthetic and real data, respectively. Additionally, we included comparisons with the greedy \texttt{L0PD} method for Pareto front estimation on real datasets in the least-squares setting. In all cases, \Algo{} using either \texttt{FBS}, \texttt{IRL1}, or \texttt{skglm} as inner solver achieved the best results at the cost of a reasonable increase in computational burden, especially given the difficulty of such $\ell_0$ problems.

A notable feature of \Algo{} is its versatility. It can be deployed with various inner algorithms and can leverage different relaxations $J_\Psi$ associated potentially to different geometries described by the choice of $\Psi$. 

Several directions for future work emerge from our observations. In particular, while \texttt{skglm} often provided the best compromise between performance and computational cost in our experiments, no single inner solver consistently outperformed the others across all settings. Likewise, although $\Psi_\mathrm{KL}$ showed promising results in the KL experiments, its advantage was not systematic in every run. These findings motivate the development of enhanced versions of \Algo{} capable of adaptively combining the strengths of multiple optimization algorithms and exact relaxations $J_\Psi$, potentially leveraging their complementary ability to eliminate different local minimizers without incurring additional computational cost.


\section*{Acknowledgments}

M. Essafri and E. Soubies acknowledge financial support from the French National Research Agency (ANR) under project EROSION (ANR-22-CE48-0004) and from the Toulouse AI cluster ANITI (ANR-23-IACL-0002).  
L. Calatroni acknowledges financial support from the European Research Council (ERC) under grant MALIN (No. 101117133).

This work represents only the view of the authors. The European Commission and the other
organizations are not responsible for any use that may be made of the information it contains.


\appendix

\section{Proof of Theorem~\ref{th:min-loc-removed}}\label{app:th-min-loc}

\begin{lemma}\label{lem:alpha-increasing}
Let $\psi$ satisfy the conditions of Definition~\ref{def:brex}. Then, $d_\psi(0,\cdot) : \R_{\geq 0 } \to \R_{\geq 0 }$ is a one-to-one map. Its inverse is the function $\alpha : \R_{\geq 0} \to \R_{\geq 0}$ that, given $\lambda \geq 0$, defines the bound $\alpha(\lambda)$ at which \BR{} becomes constant in~\eqref{eq:brex-generic-formula}.
\end{lemma}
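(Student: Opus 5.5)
The map in question is $h(t) := d_\psi(0,t) = \psi(0) - \psi(t) + \psi'(t)\,t$ for $t \geq 0$. The plan is to show that $h$ is continuous, vanishes at $0$, is strictly increasing, and is unbounded above. Together these give that $h$ is a bijection of $\R_{\geq 0}$ onto itself. The identification of the inverse with $\alpha$ is then just a rereading of the definition following~\eqref{eq:brex-generic-formula}.

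\textbf{Step 1: basic properties of $h$.} Since $\psi$ is differentiable on the relevant domain (including the boundary point $0$ when $\Cc = \R_{\geq 0}$, at least one-sided), $h$ is continuous on $\R_{\geq 0}$ and $h(0) = 0$. Strict convexity of $\psi$ gives, for $t>0$,
\[
\psi(0) > \psi(t) + \psi'(t)(0-t),
\]
which is exactly $h(t) > 0$, so $h$ maps into $\R_{\geq 0}$.

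\textbf{Step 2: strict monotonicity.} For $t>0$ in $\operatorname{int}(\Cc)$ we have $h'(t) = \psi''(t)\,t \geq 0$. To get strictness without assuming $\psi''>0$, I would instead use the monotonicity of $\psi'$ directly. Take $0 \le s < t$. Then
\[
h(t) - h(s) = \psi(s) - \psi(t) + \psi'(t)t - \psi'(s)s.
\]
By convexity, $\psi(t) - \psi(s) \le \psi'(t)(t-s)$, so
\[
h(t) - h(s) \geq \psi'(t)\,s - \psi'(s)\,s = s\bigl(\psi'(t) - \psi'(s)\bigr) \geq 0.
\]
This bound only gives strictness when $s>0$. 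A cleaner route is to use $h(t) - h(s) = d_\psi(s,t) + s\bigl(\psi'(t) - \psi'(s)\bigr)$, which can be checked by direct expansion. Strict convexity gives $d_\psi(s,t) > 0$ for $s \neq t$, and the second term is nonnegative because $\psi'$ is increasing and $s \geq 0$. Hence $h(t) > h(s)$, and $h$ is injective.

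\textbf{Step 3: surjectivity.} Condition ii of Definition~\ref{def:brex} states that $x \mapsto \psi'(x)x - \psi(x)$ is coercive, so $h(t) = \psi(0) + \bigl(\psi'(t)t - \psi(t)\bigr) \to +\infty$ as $t \to +\infty$. Combined with continuity, $h(0) = 0$, and the intermediate value theorem, every $\lambda \geq 0$ is attained, and by Step 2 it is attained exactly once. This yields the unique $\alpha(\lambda)$ with $d_\psi(0,\alpha(\lambda)) = \lambda$, which is the parameter $\alpha_n$ appearing in~\eqref{eq:brex-generic-formula}; it is positive for $\lambda>0$. The inverse of a continuous strictly increasing bijection is itself continuous and strictly increasing, which is the property used later for the bounds of Theorem~\ref{th:min-loc-removed}.

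\textbf{Main obstacle.} The delicate point is regularity at the endpoint $0$ when $\Cc = \R_{\geq 0}$, because $\psi$ is assumed differentiable only on $\operatorname{int}(\Cc)$. I would handle this by defining $\psi'(0)$ as the one-sided limit, which exists and is finite since the paper uses $\psi_n'(0)$ explicitly. Continuity of $h$ at $0$ then reduces to $t\,\psi'(t) \to 0$ as $t \to 0^+$, which holds because $\psi'$ is monotone and bounded near $0$.
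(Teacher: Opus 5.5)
Your proof is correct and follows the same route as the paper: $h(0)=0$, $h=d_\psi(0,\cdot)$ is strictly increasing, and $h$ is unbounded by Condition~ii, so $h$ is a bijection of $\R_{\geq 0}$ whose inverse is $\alpha$ by definition. You prove monotonicity through the identity $h(t)-h(s)=d_\psi(s,t)+s\bigl(\psi'(t)-\psi'(s)\bigr)$ instead of differentiating, which is slightly more careful than the paper: its derivative should read $t\,\psi''(t)$, and it tacitly assumes $\psi''>0$, which strict convexity does not imply. You also state explicitly the continuity of $h$ at $0$ that surjectivity requires, which the paper leaves implicit (via your one-sided differentiability assumption at $0$).
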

\begin{proof}
 First, \(d_{\psi}(0, 0)=0\). Second, Condition i. (strict convexity of \(\psi\))  ensures that \(d_{\psi}(0, \cdot)\) is increasing on \(\R_{\geq 0}\). Indeed, \(d_{\psi}(0, \cdot)' = \psi''\), which is positive on $\R_{>0}$. Third, from Condition ii., \(d_{\psi}(0,x) \to \infty\) as $x \to \infty$. Hence,  \(d_{\psi}(0, \cdot) : \R_{\geq 0} \to \R_{\geq 0}\) is a one-to-one map. The end of the statement follows from the definition of $\alpha$.
\end{proof}

\noindent
{\bf Proof of the theorem}.
Let $\hat{\x} \in \Cc^N$ be a local minimizer of \( J_\Psi \) and set $g_n = \psi_n'(0)-\left<\a_n ,\nabla F_\y\left(\A\hat{\x} \right) \right> $ for all $n \in \sigma^c(\hat \x)$. Then, we get from Proposition~\ref{prop:loc-min-jpsi} that
    \begin{align}
        &\forall n \in \sigma(\hat{\x}), \; |\hat{x}_n| > \alpha_n(\lambda_0), \label{eq:condition1} \\
        & \forall n \in \sigma^c(\hat{\x}), \; \varphi_n\left( g_n\right) \leq \psi_n'\left(\alpha_n(\lambda_0)\right). \label{eq:condition2}
    \end{align}
    where we recall that $\varphi_n = |\cdot|$ when $\Cc = \R$ and $\varphi_n = \max(\cdot, \psi_n'(0))$ when $\Cc = \R_{\geq 0}$. Moreover, by differentiability and strict convexity of  $\psi_n$, we have that $\psi_n' : \Cc \to \mathrm{Range}(\psi_n')$ is bijective and is thus invertible on its range. Moreover, from Condition iii. of Definition~\ref{def:brex}, there exists $ L \in \R_{\geq 0} \cup \{+\infty\}$ such that 
    \begin{itemize}
    \itemsep0pt
        \item  $\mathrm{Range}(\psi_n') = (-L,L) $ if $\Cc = \R$,
        \item $\mathrm{Range}(\psi_n') = [\psi_n'(0),L) $ if $\Cc = \R_{\geq 0}$.
    \end{itemize}
    Hence $(\psi_n')^{-1}(\varphi_n(g_n)) \in \R_{\geq 0}$  is well defined as, from~\eqref{eq:condition2}, we get that  $\varphi_n\left( g_n\right) \in \mathrm{Range}(\psi_n')$.
    
    From this together with Lemma~\ref{lem:alpha-increasing}, we can rewrite \eqref{eq:condition1}-\eqref{eq:condition2} as
    \begin{align}
        &\forall n \in \sigma(\hat{\x}), \; d_{\psi_n}(0,|\hat{x}_n|) > \lambda_0, \label{eq:condition1-2} \\
        & \forall n \in \sigma^c(\hat{\x}), \; d_{\psi_n}\left(0,(\psi_n')^{-1} \left(\varphi_n\left( g_n\right) \right) \right)\leq \lambda_0. \label{eq:condition2-2}
    \end{align}
 Taking the min (resp. the max) over $n$ in~\eqref{eq:condition1-2} (resp.~\eqref{eq:condition2-2}) completes the proof.

\section{Additional Figures}

This appendix includes the complementary Figures~\ref{fig:LS-real-15seconds} to~\ref{fig:LR-real-300seconds}.

\begin{figure}[!t]
\centering
\includegraphics[width=\linewidth]{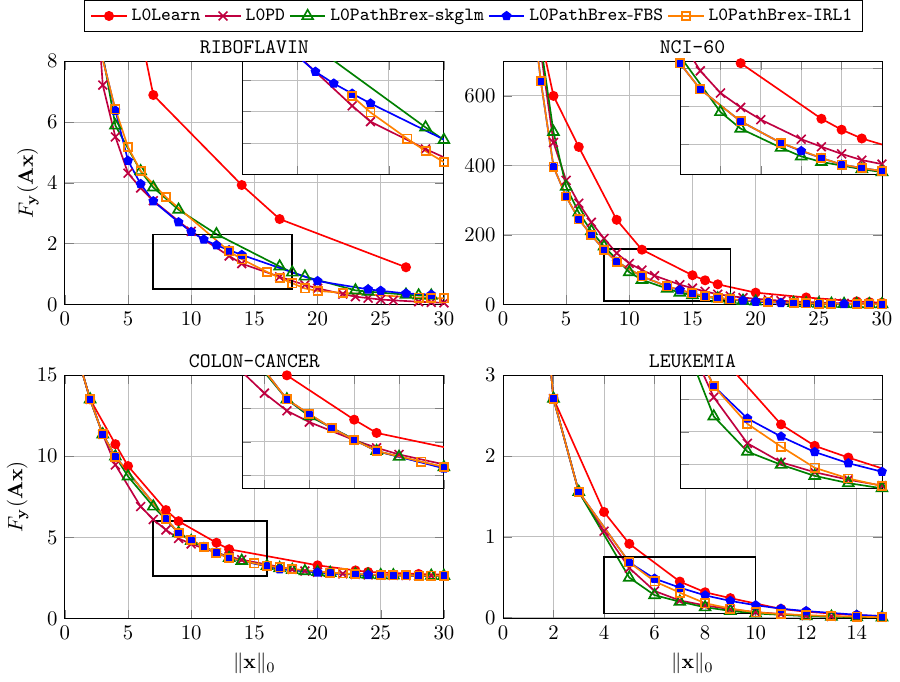}
\vspace{-0.5cm}
\caption{Same as Figure~\ref{fig:LS-real-60seconds} with a time limit of 15 seconds for the \Algo{} variants.
}
\label{fig:LS-real-15seconds}
\end{figure}

\begin{figure}[!t]
\centering
\includegraphics[width=\linewidth]{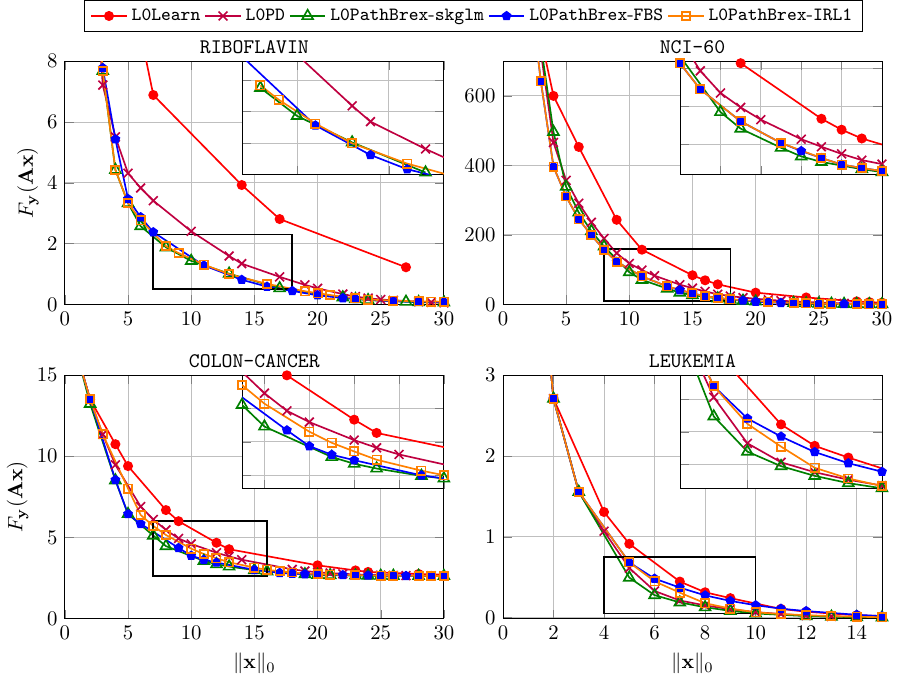}
\vspace{-0.5cm}
\caption{
Same as Figure~\ref{fig:LS-real-60seconds} with a time limit of 5 minutes for the \Algo{} variants.
}
\label{fig:LS-real-300seconds}
\end{figure}

\begin{figure}[!t]
\includegraphics[width=\linewidth]{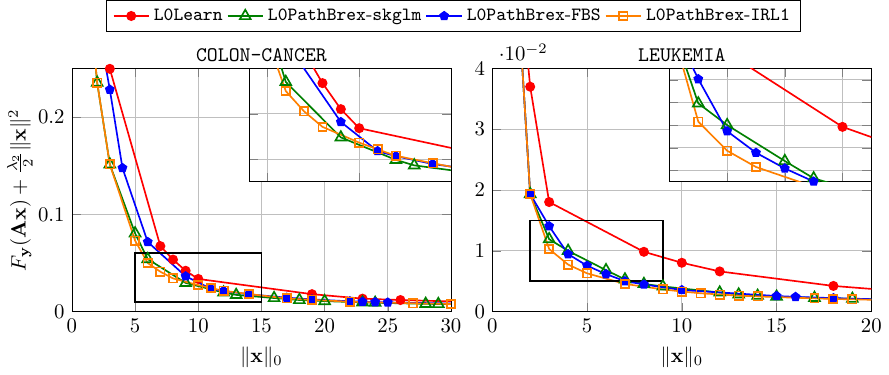}
\vspace{-0.5cm}
\caption{Same as Figure~\ref{fig:LR-real-60seconds} with a time limit of 15 seconds for the \Algo{} variants.}\label{fig:LR-real-15seconds}
\end{figure}

\begin{figure}[!t]
\includegraphics[width=\linewidth]{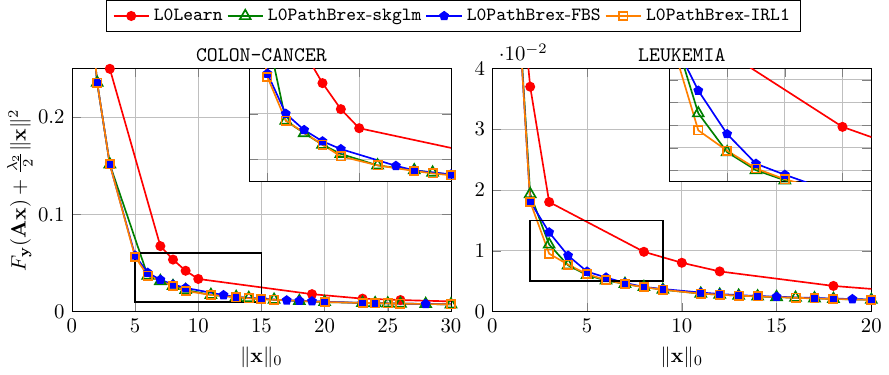}
\vspace{-0.5cm}
\caption{Same as Figure~\ref{fig:LR-real-60seconds} with a time limit of 5 minutes for the \Algo{} variants.}\label{fig:LR-real-300seconds}
\end{figure}

\bibliographystyle{plainnat}
\bibliography{arxiv}

\end{document}

%% file: newcommand.tex
\newcommand{\R}{\mathbb{R}}
\newcommand{\N}{\mathbb{N}}

  {\end{list}}

\makeatletter

  \gdef\listctr{list\romannumeral\the\@listdepth}\expandafter
  
\makeatother

\newcommand{\ve}[1]{\mathbf{#1}}
\def\x{\ve{x}}
\def\y{\ve{y}}

\def\z{\ve{z}}
\def\e{\ve{e}}

\def\a{\ve{a}}

\def\A{\ve{A}}
\def\Cc{\mathcal{C}}

\def\R{\mathbb R}

\newcommand{\argmin}{\operatornamewithlimits{arg}\operatornamewithlimits{min}}

\newcommand{\BR}{B-rex}
\newcommand{\AlgoCR}[1]{\texttt{L0PathBrex}-#1}
\newcommand{\Algo}{\texttt{L0PathBrex}}
\newcommand{\Algolearn}{\texttt{L0Learn}}